\numberwithin{equation}{section}
\newtheorem{theorem}{Theorem}[section]
\newtheorem{lemma}{Lemma}
\newtheorem{corollary}[lemma]{Corollary}
\numberwithin{lemma}{section}
\theoremstyle{definition}
\newtheorem{definition}{Definition}
\theoremstyle{remark}
\newtheorem{remark}{Remark}
\renewcommand{\vec}[1]{\boldsymbol{#1}} 
\begin{document}
\date{}
\title[Co-circular C.C.]{On  the centered co-circular central configurations for the n-body problem}

 \author{Zhiqiang Wang}
\address{College of Mathematics and Statistics,  Chongqing University, Chongqing 401331, P.R.China}
\curraddr{}
\email{wzq2015@cqu.edu.cn}

\begin{abstract}
For the power-law potential  $n$-body problem, we study a special kind of central configurations where all the masses lie on a circle and the center of mass coincides with the center of the circle.
It is also called the centered co-circular central configuration. We get some symmetry results for  such central configurations.
We show that for positive numbers $\alpha>0$ and  integers $n\geq3$ satisfying $\frac{1}{n}\sum_{j=1}^{n-1}\csc^{\alpha}\frac{j\pi}{n}\leq1+\frac{\alpha}{4}$, the regular $n$-gon  with equal masses is the unique centered co-circular central configuration for the $n$-body problem with  power-law potential $U_{\alpha}$.  It quickly follows that for the Newtonian $n$-body problem (in the case $\alpha=1$) and $n\leq6$, the regular $n$-gon   is the unique centered co-circular central configuration.

\end{abstract}
\maketitle

\section{Introduction and the Main Result}
\label{intro}
The  $n$-body problem concerns the  motion of $n$ point masses $m_k>0$ and their positions $x_k\in \mathbb{R}^{3}$, $k=1,...,n$. 
For $n\geq2$ and $\alpha\geq0$, this motion can be  governed by a power-law potential:
\begin{equation*}
m_k\ddot{x}_k=\frac{\partial U_{\alpha}(\vec{x})}{\partial x_k},\quad k=1,2,\ldots,n ,
\end{equation*}
where $\vec{x}=(x_1,x_2,\ldots,x_n)^T$, and in a slight abuse of notation, 
$$U_{\alpha}(\vec{x})=
\left\{ \begin{aligned}
&\sum\limits_{1\leq j<k\leq n}\frac{m_jm_k}{|x_j-x_k|^{\alpha}},&\alpha\neq0,\\
&\sum\limits_{1\leq j<k \leq N}m_jm_k\log |x_{j}-x_{k}|, & \alpha=0.
\end{aligned}
\right.
$$
When $\alpha=1$, it is the classical Newtonian $n$-body problem,
and for the limiting case $\alpha=0$, it corresponds to an N-Vortex Problem.
In order to avoid  collisions, we consider the configuration space
$${\bold{X}}\backslash\triangle=\{(x_1,x_2,\ldots,x_n)^T\in\mathbb{R}^{3n}:x_j\neq x_k \quad \text{when} \quad j\neq k\},$$
and give the following definition of  central configurations. 
\begin{definition}
A configuration $\vec{x}=(x_1,x_2\ldots,x_n)^{T}\in  {\bold{X}}\backslash\triangle$ is  called a central configuration (CC for short) 
 for masses $\vec{m}=(m_1,m_2,\dots,m_{n})^{T}$  if there exists some constant $\lambda \in \mathbb{R}$ such that
\begin{equation}\label{eqcc0}
\sum_{\substack{j\neq k \\ j=1}}^n\frac{x_j-x_k}{|x_j-x_k|^{\alpha+2}}m_jm_k=-\frac{\lambda}{\alpha} m_k(x_k-c_0),\quad k=1,2,...,n ,
\end{equation}
or simply
$$\frac{\partial U_{\alpha}(\vec{x})}{\partial x_k}=-\lambda m_k(x_k-c_0),\quad k=1,2,...,n ,$$
where the center of mass is defined as
$c_0=\frac{\scriptstyle
\sum_{k=1}^nm_kx_k}{
\scriptstyle
\sum_{k=1}^n m_k}.$
\end{definition}
Central configurations play ``central'' role in the  $n$-body problem \cite{saari1980} such as in the analysis of collision orbits or expanding gravitational systems. It also gives rise to a family of simple   and explicit  periodic solutions \cite{moeckel1990}.
For instance, a planar central configuration can naturally give rise to a relative equilibrium motion where every mutual distance $r_{jk}=|x_{j}-x_{k}|$ stays constant.

The equations for central configurations \eqref{eqcc0} are invariant under rotations, translations and dilations. Thus we always consider equivalent classes of central configurations modulo these symmetries.
The study of Newtonian central configurations dates back to Euler \cite{euler} and Lagrange \cite{lagrange} who studied the  3-body case.
There are only five classes of central configurations for the Newtonian 3-body problem, three Euler collinear CCs and two Lagrange equilateral triangle CCs.
Although centuries have passed since then, a complete understanding of CCs remains elusive. 
When $n>3$, it is almost impossible to solve the equations of central configurations \eqref{eqcc0} for any given masses. 
A longstanding problem, known as the finiteness problem,  is for any given positive masses whether there are only finitely many equivalent classes of CCs.
It is proposed by Wintner  \cite{Wintner} and  listed by Smale \cite{Smale}  as the sixth problem on his list of problems for this century. 
Albouy et al. \cite{albouy2012some} also list it as the 9th open problem in celestial mechanics.
For the collinear case, Moulton \cite{moulton} showed that there are exactly $n!/2$ equivalent classes of CCs for the $n$-body problem.
Most recently, using BKK theory in algebraic geometry, Hampton and Moeckel \cite{HM2006} proved the finiteness of the number of  equivalent classes of CCs for n = 4.
For 5 bodies in the plane, Albouy and Kaloshin \cite{AK2012} showed the 
 the number is finite, except perhaps if the masses belong to an explicit codimension 2 subvariety of the space of positive masses.
For the spatial 5-body central configuration, Hampton and Jenson \cite{HJ2011} showed the finiteness,  apart from some explicitly given special cases of mass values.

In this paper, we study a special kind of central configuration where all the masses lie on a circle and the center of mass coincides with the center of the circle.
In what follows we call it \emph{centered co-circular central configurations}  as in \cite{Hampton16}.
There are also many work on the co-circular central configurations which is not centered (see  \cite{cor-roberts} \cite{santoprete2021}and  references therein).
There is an open question asked by Chenciner \cite{chenciner04} when he studied the choreography solutions in the $n$-body problem.

\begin{center}
\emph{Is the regular $n$-gon with equal masses the sole central configuration such that
all the bodies lie on a circle and
the center of mass coincides with the center of the circle?}
\end{center}

The regular $n$-gon central configuration with equal masses is the trivial  one. It gives rise to the trivial choreography solution where each body moves  (with equal time spacings) on the same circle.
If there were some other  centered co-circular central configurations we can immediately have a choreography solution with unequal time spacings between the bodies and maybe unequal masses.
Chenciner's question is also listed as Problem 12 in a collection of important open problems in celestial mechanics\cite{albouy2012some}.
The first one who answered  the question was Hampton. He proved that the only four-body centered co-circular CC  is the square with equal masses \cite{HAMPTON2005}. 
Llibre and Valls \cite{Llibre2015} studied the  case $n = 5$. 
All the above results are just in the Newtonian case $\alpha=1$.
A big progress is the work of  Cors et al.\cite{CORS201444} who study the general co-circular configurations with power-law potential $U_{\alpha} (\alpha\geq0)$. They proved that, for any choice of positive masses, if such a central configuration exists, it is unique. It  quickly follows that when the masses are all equal,  the only solution is the regular $n$-gon. 
In that paper, they also give a positive answer to Chenciner's question for the power law potential $U_{0}$ (in the case $\alpha=0$).
They proved that, in the n-vortex problem  the only possible centered co-circular central configuration is the regular $n$-gon with equal vorticities.
Our  main result is  
\begin{theorem}\label{main1}
For all the integers $n\geq3$ and real numbers $\alpha>0$ satisfying 
$$\frac{1}{n}\sum_{j=1}^{n-1}\csc^{\alpha}\frac{j\pi}{n}\leq1+\frac{\alpha}{4},$$
 the regular $n$-gon CC with equal masses is the unique central configuration for power-law potential $U_{\alpha}$, such that all the bodies lie on a circle, and the center of mass coincides with the center of the circle.
\end{theorem}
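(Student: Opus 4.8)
The plan is to reduce Theorem~\ref{main1} to the uniqueness theorem of Cors et al.\ \cite{CORS201444}: it suffices to prove that, whenever $n$ and $\alpha$ satisfy the stated inequality, every centered co-circular central configuration has $m_1=\cdots=m_n$, because for equal masses \cite{CORS201444} leaves the regular $n$-gon as the only possibility. To set things up, note that a circle spans a plane, so such a configuration is planar; identify that plane with $\mathbb{C}$, take the common circle to be the unit circle, and write $x_k=e^{i\theta_k}$, so that the centering assumption reads $\sum_k m_k e^{i\theta_k}=0$ and $c_0=0$. For $j\neq k$ one has the elementary identity $\dfrac{x_j-x_k}{|x_j-x_k|^{\alpha+2}}=\dfrac{1}{r_{jk}^{\alpha}}\cdot\dfrac{1}{\overline{x_j-x_k}}$, where $r_{jk}=|x_j-x_k|=2\sin\tfrac{\gamma_{jk}}{2}$ and $\gamma_{jk}\in(0,2\pi)$ is the angular gap from $x_k$ to $x_j$. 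Substituting into \eqref{eqcc0}, conjugating and multiplying by $x_k$ (using $\overline{x_k}=x_k^{-1}$) rewrites the central configuration equations as the complex family $\sum_{j\neq k}\frac{m_j}{r_{jk}^{\alpha}}\cdot\frac{x_k}{x_k-x_j}=\frac{\lambda}{\alpha}$; since $\frac{x_k}{x_k-x_j}=\frac12+\frac{i}{2}\cot\frac{\gamma_{jk}}{2}$, setting $h(\gamma):=\csc^{\alpha}\tfrac{\gamma}{2}$ (so that $r_{jk}^{-\alpha}=2^{-\alpha}h(\gamma_{jk})$) its real and imaginary parts become
\[
\text{(R)}\quad \sum_{j\neq k} m_j\, h(\gamma_{jk}) = 2^{\alpha}\sigma \ \ (\text{independent of }k),
\qquad
\text{(T)}\quad \sum_{j\neq k} m_j\, h'(\gamma_{jk}) = 0,
\]
where $\sigma:=2\lambda/\alpha$. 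The kernel $h$ is strictly convex on $(0,2\pi)$ for $\alpha>0$ and symmetric about $\gamma=\pi$; multiplying (R) by $m_k$ and summing gives $\sigma M=2U_\alpha$ with $M=\sum_k m_k$; and for the regular $n$-gon with equal masses $2^{\alpha}\sigma/M=\tfrac1n\sum_{j=1}^{n-1}\csc^{\alpha}\tfrac{j\pi}{n}$, so the hypothesis of the theorem is precisely the normalized bound $2^{\alpha}\sigma_{\mathrm{reg}}/M\le 1+\tfrac{\alpha}{4}$ at the regular configuration, which is the bound the rigidity argument must play against (R)--(T).

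The core of the argument is to force $m_1=\cdots=m_n$. Since every $x_k$ lies on the unit circle, $r_{jk}\le 2$ and hence $h(\gamma_{jk})\ge 1$; writing $h(\gamma_{jk})=1+\eta_{jk}$ with $\eta_{jk}\ge 0$, equation (R) becomes $2^{\alpha}\sigma=(M-m_k)+\sum_{j\neq k}m_j\eta_{jk}$, i.e. $m_k=(M-2^{\alpha}\sigma)+E_k$ with $E_k:=\sum_{j\neq k}m_j\eta_{jk}\ge 0$. Thus the masses are equal if and only if the purely geometric functionals $E_k$ all coincide — which they do for the regular $n$-gon. The plan is then to exploit (T), which says that for each $k$ the remaining masses, weighted by $h'$ evaluated at their gaps from $x_k$, are in equilibrium, so that any asymmetry of the gaps about the antipode of $x_k$ must be compensated by an asymmetry of the masses; combining this with the centering relation $\sum_k m_k x_k=0$ and with the hypothesis, one should be able to assemble a single scalar inequality whose left-hand side is a nonnegative measure of the deviation of the configuration from the regular $n$-gon, and which is therefore forced to vanish. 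The factor $\tfrac{\alpha}{4}$ should emerge from a second-order (convexity) expansion of (R) and (T) about the regular $n$-gon, as the borderline at which the destabilizing tendency of an inhomogeneous mass distribution is dominated by the rigidity supplied by the strict convexity of $h$. Once all masses are equal, \cite{CORS201444} identifies the configuration as the regular $n$-gon, finishing the proof.

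The step I expect to be the real obstacle is exactly this quantitative rigidity: producing one inequality in which the regular $n$-gon is the unique equality case and whose error term is controlled by the nonnegative number $\bigl(1+\tfrac{\alpha}{4}\bigr)-\tfrac1n\sum_{j=1}^{n-1}\csc^{\alpha}\tfrac{j\pi}{n}$. This will demand careful bookkeeping of the nearest-neighbour pairs — the only pairs whose separation can approach the diameter, and hence the only ones for which $\eta_{jk}$ can be of size comparable to $1$ — together with the global obstruction coming from $\sum_k m_k x_k=0$; everything else (the planar reduction, the derivation of (R) and (T), and the appeal to \cite{CORS201444}) is routine. Finally, the Newtonian corollary needs only the elementary check that $\tfrac1n\sum_{j=1}^{n-1}\csc\tfrac{j\pi}{n}\le\tfrac54$ for $n=3,4,5,6$ — the left-hand side equals $\tfrac{4\sqrt3}{9}\approx 0.770$, $\tfrac{1+2\sqrt2}{4}\approx 0.957$, about $1.10$, and about $1.22$ respectively — together with the remark that the inequality first fails at $n=7$, where it is about $1.32$.
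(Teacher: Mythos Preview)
Your derivation of (R) and (T) is correct and coincides with the paper's Section~2, and the reduction ``equal masses $\Rightarrow$ regular $n$-gon'' via \cite{CORS201444} is also in the paper (Corollary~\ref{em}). But the proposal is not a proof: you explicitly leave open the step you call ``quantitative rigidity'', and the mechanism you propose for it --- a second--order expansion of (R) and (T) about the regular $n$-gon --- would at best give \emph{local} uniqueness near the regular configuration. The theorem is a global statement: you must rule out centered co-circular CCs with masses far from equal and angles far from $2\pi k/n$, and a Taylor expansion about $(\vec 1,\vec\theta_{\vec 1})$ provides no control there. Your identity $m_k=(M-2^{\alpha}\sigma)+E_k$ is true but does not by itself pin down the $E_k$, and nothing you wrote explains how the \emph{hypothesis at the regular $n$-gon} transfers to a constraint at a putative unequal-mass CC.

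The paper closes exactly this gap with a variational/spectral argument that is quite different from your outline. With $K=2^{3+\alpha}/\alpha$ and $f_K=U_\alpha+U_{-2}/K$, every centered co-circular CC $(\vec m,\vec\theta_{\vec m})$ is the unique minimizer of $f_K(\vec m,\cdot)$ on $\mathcal K_0$ (Lemma~\ref{unique1}); this gives the global comparison $f_K(\vec m,\vec\theta_{\vec m})\le f_K(\vec m,\vec\theta_{\vec 1})$. Writing $f_K(\vec y,\vec\theta)=\tfrac12\,\vec y^{\,T}H\vec y$ with $H_{ij}=r_{ij}^{-\alpha}+r_{ij}^{2}/K$, the key Lemma~\ref{lemma12} shows that if $2^{\alpha+1}U_\alpha(\vec m,\vec\theta_{\vec m})/M^2\le 1+\alpha/4$ then the matrix $(\tfrac{2U_\alpha}{M^2}+\tfrac{2}{K})J_n-H_{\alpha,\vec m}$ is positive semidefinite with unique null direction $\vec m$; the constant $\alpha/4$ appears here, through the elementary fact that $r^{-\alpha}+r^2/K\ge 2^{-\alpha}+4/K$ on $(0,2]$, which makes the off-diagonal entries nonpositive and the matrix diagonally dominant. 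Combined with the permutation inequality $f_K(\vec m,\vec\theta_{\vec m})\le f_K(P\vec m,\vec\theta_{\vec m})$ (Lemma~\ref{SPCC}, \ref{CCcond}), this forces $\vec m$ to be constant under that bound. Finally the hypothesis of the theorem is exactly this bound at the regular $n$-gon; since $H_{\alpha,\vec 1}$ is circulant, its only nonnegative eigenvalue corresponds to $\vec 1$, and the two inequalities above give $\vec m^{\,T}H_{\alpha,\vec m}\vec m/M^2\le \vec m^{\,T}H_{\alpha,\vec 1}\vec m/M^2<\vec 1^{\,T}H_{\alpha,\vec 1}\vec 1/n^2$, transferring the bound from the regular configuration to the unknown one and producing a contradiction unless $\vec m$ is constant. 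This transfer step --- minimizer comparison plus circulant spectrum --- is the idea your proposal is missing.
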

We notice that the function $\frac{1}{n}\sum_{j=1}^{n-1}\csc^{\alpha}\frac{j\pi}{n}$ is increasing  with respect to $n$ and $\alpha$, separately.
For any given integer $n>0$, we can get a positive answer to Chenciner's question for the $n$-body problem with power-law potential $U_{\alpha}$ in the case $\alpha=\alpha(n)$ is  sufficiently small.
For the classical  Newtonian case $\alpha=1$, a direct computation shows that 
$$\frac{1}{6}\sum_{j=1}^{6-1}\csc\frac{j\pi}{6}=\frac{5}{6}+\frac{2\sqrt{3}}{9}<1+\frac{1}{4}.$$
\begin{corollary}
In the Newtonian $n(\leq6)$ body problem, the regular $n$-gon CC with equal masses is the unique central configuration, such that all the bodies lie on a circle, and the center of mass coincides with the center of the circle.
\end{corollary}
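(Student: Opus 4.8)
The plan is to work entirely on the circle, extract the algebraic content of \eqref{eqcc0} through radial and tangential projections, and then run a convexity comparison with the regular $n$-gon.

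\textbf{Reduction and identities.} Using the rotation--translation--dilation invariance of \eqref{eqcc0} I would put all bodies on the unit circle, $x_k=(\cos\theta_k,\sin\theta_k)$, normalize $\sum_k m_k=1$, and record the ``centered'' hypothesis as $\sum_k m_k x_k=\vec 0$. Since $|x_k|=1$ one has $x_j\cdot x_k=1-\tfrac12 r_{jk}^2$ with $r_{jk}=|x_j-x_k|$, and $\cos^2\tfrac{\theta_j-\theta_k}{2}=1-\tfrac14 r_{jk}^2$. Dividing \eqref{eqcc0} by $m_k$ and dotting with $x_k$ collapses the left-hand side (each term picks up the factor $x_j\cdot x_k-1=-\tfrac12 r_{jk}^2$) and yields the \emph{equipotential relation}
\begin{equation}\label{eq:sketch-B}
\sum_{j\ne k}\frac{m_j}{r_{jk}^{\alpha}}=\frac{2\lambda}{\alpha}=:c\qquad(k=1,\dots,n);
\end{equation}
multiplying \eqref{eq:sketch-B} by $m_k$ and summing gives $2U_{\alpha}=c$. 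Dotting \eqref{eqcc0} with the unit tangent $\tau_k=(-\sin\theta_k,\cos\theta_k)$ gives the tangential (criticality) equations $\sum_{j\ne k}m_j r_{jk}^{-\alpha}\cot\tfrac{\theta_j-\theta_k}{2}=0$. Finally, expanding $0=\bigl|\sum_k m_k x_k\bigr|^{2}$ with $|x_k|=1$ yields the centered identities $\sum_{j<k}m_j m_k r_{jk}^{2}=1$ and, for each fixed $k$, $\sum_{j\ne k}m_j r_{jk}^{2}=2$.

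\textbf{A convexity estimate.} The map $u\mapsto(1-u)^{-\alpha/2}$ is strictly convex on $[0,1)$, so it lies above its tangent at $u=0$: $(1-u)^{-\alpha/2}\ge 1+\tfrac{\alpha}{2}u$, with equality only at $u=0$. Taking $u=\cos^2\tfrac{\theta_j-\theta_k}{2}=1-\tfrac14 r_{jk}^2$ and using $r_{jk}^{-\alpha}=2^{-\alpha}(1-u)^{-\alpha/2}$ this becomes $r_{jk}^{-\alpha}\ge 2^{-\alpha}\bigl(1+\tfrac{\alpha}{2}-\tfrac{\alpha}{8}r_{jk}^{2}\bigr)$. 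Substituting into \eqref{eq:sketch-B} and using $\sum_{j\ne k}m_j r_{jk}^{2}=2$ gives, for every $k$,
\begin{equation}\label{eq:sketch-ineq}
2^{\alpha}c\;\ge\;\Bigl(1+\tfrac{\alpha}{2}\Bigr)(1-m_k)-\tfrac{\alpha}{4},
\end{equation}
equivalently a uniform lower bound $m_k\ge 1-\dfrac{2^{\alpha}c+\alpha/4}{1+\alpha/2}$, and hence $2^{\alpha}c\ge\bigl(1+\tfrac{\alpha}{2}\bigr)\tfrac{n-1}{n}-\tfrac{\alpha}{4}$ after summing in $k$. For the regular $n$-gon with equal masses a direct computation gives $2^{\alpha}c=\tfrac1n\sum_{j=1}^{n-1}\csc^{\alpha}\tfrac{j\pi}{n}$, and the defect in \eqref{eq:sketch-ineq} equals $\tfrac1n\sum_{j=1}^{n-1}\bigl(\csc^{\alpha}\tfrac{j\pi}{n}-1-\tfrac{\alpha}{2}\cos^{2}\tfrac{j\pi}{n}\bigr)$.

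\textbf{Closing the argument.} The core is to match \eqref{eq:sketch-ineq} with an upper estimate for the defect $\sum_{j\ne k}m_j\bigl(r_{jk}^{-\alpha}-2^{-\alpha}(1+\tfrac{\alpha}{2}-\tfrac{\alpha}{8}r_{jk}^{2})\bigr)$, obtained by exploiting the remaining structure — the equipotential relation \eqref{eq:sketch-B} at \emph{every} body together with the tangential equations — so that the combined inequalities are saturated precisely by the regular $n$-gon with equal masses; the hypothesis $\tfrac1n\sum_{j=1}^{n-1}\csc^{\alpha}\tfrac{j\pi}{n}\le 1+\tfrac{\alpha}{4}$ is then exactly the threshold at which the resulting two-sided bound collapses, forcing $m_1=\cdots=m_n$. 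Once the masses are equal we are in the equal-mass case, where the regular $n$-gon is the only centered co-circular central configuration (by the uniqueness theorem of Cors et al.\ \cite{CORS201444}, or directly from the strict minimality of the regular polygon for $\sum_{j<k}r_{jk}^{-\alpha}$ on the circle combined with \eqref{eq:sketch-B}). The Corollary then follows because $\tfrac1n\sum_{j=1}^{n-1}\csc^{\alpha}\tfrac{j\pi}{n}$ is increasing in both $n$ and $\alpha$, and at $\alpha=1$, $n=6$ it equals $\tfrac56+\tfrac{2\sqrt3}{9}<\tfrac54$.

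\textbf{Main obstacle.} I expect the difficulty to be concentrated in the closing step: deriving the correct sharp upper estimate for the defect from the tangential equations and the pointwise (not merely averaged) form of \eqref{eq:sketch-B}, and verifying that $1+\tfrac{\alpha}{4}$ is the \emph{optimal} constant making that estimate force equal masses. A secondary delicate point is the equality analysis: showing that equality in the convexity step, which a priori constrains only the mutual distances, propagates — via the co-circularity (Ptolemy-type) relations among the $r_{jk}$ — all the way to the assertion that the configuration is a regular $n$-gon.
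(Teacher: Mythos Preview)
Your convexity inequality $r_{jk}^{-\alpha}\ge 2^{-\alpha}\bigl(1+\tfrac{\alpha}{2}-\tfrac{\alpha}{8}r_{jk}^{2}\bigr)$ is exactly the ingredient the paper uses (rewritten there as $r_{jk}^{-\alpha}+r_{jk}^{2}/K\ge 2^{-\alpha}+4/K$ with $K=2^{3+\alpha}/\alpha$), and your final numerical check for $\alpha=1$, $n=6$ is correct. But the ``closing argument'' you sketch is genuinely missing, and you flag this yourself: from \eqref{eq:sketch-B} and the centered identity you only extract a \emph{lower} bound on $2^{\alpha}c$ and on each $m_k$; nothing in the tangential equations or in Ptolemy-type relations will supply a matching upper bound on $c$ sharp enough to force equal masses. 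The hypothesis $\tfrac1n\sum_j\csc^{\alpha}\tfrac{j\pi}{n}\le 1+\tfrac{\alpha}{4}$ concerns the regular $n$-gon, yet your argument never links the value of $c$ for the hypothetical unequal-mass configuration to the regular-polygon value.

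The paper closes this gap with three pieces you do not have. First, a variational characterization: it introduces $f_{K}(\vec m,\vec\theta)=U_{\alpha}+K^{-1}U_{-2}$ and shows that for fixed $\vec m$ the Hessian in $\vec\theta$ is diagonally dominant, so any centered co-circular CC is the \emph{unique minimizer} $\vec\theta_{\vec m}$ of $f_K(\vec m,\cdot)$; this gives the comparison $f_K(\vec m,\vec\theta_{\vec m})\le f_K(\vec m,\vec\theta_{\vec 1})$ with the regular-polygon angles $\vec\theta_{\vec 1}$. Second, a quadratic-form/eigenvalue step at the regular $n$-gon: writing $f_K(\vec y,\vec\theta_{\vec 1})=\tfrac12\vec y^{\,T}H_{\alpha,\vec 1}\vec y$, one observes that $H_{\alpha,\vec 1}$ is circulant, and your convexity inequality applied there shows (under the hypothesis) that all eigenvalues of $H_{\alpha,\vec 1}$ except the one along $\vec 1$ are negative, hence $\vec m^{\,T}H_{\alpha,\vec 1}\vec m/M^{2}<\vec 1^{\,T}H_{\alpha,\vec 1}\vec 1/n^{2}$ for unequal $\vec m$. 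Chaining the two steps yields $2^{\alpha+1}U_{\alpha}(\vec m,\vec\theta_{\vec m})/M^{2}<1+\tfrac{\alpha}{4}$. Third, the same convexity inequality, now applied at the configuration $(\vec m,\vec\theta_{\vec m})$ itself, makes the matrix $(\tfrac{2U_\alpha}{M^2}+\tfrac2K)J_n-H_{\alpha,\vec m}$ diagonally dominant with unique kernel $\vec m$; combined with a dihedral group-action argument (comparing $f_K(\vec m,\vec\theta_{\vec m})$ with $f_K(P\vec m,\vec\theta_{\vec m})$ for the cyclic shift $P$) this forces a contradiction unless $\vec m$ is constant. It is this loop --- variational inequality in $\vec\theta$, then spectral inequality in $\vec m$, then back to a diagonal-dominance/permutation argument --- that turns the single pointwise inequality into a rigidity statement; a direct two-sided estimate of the ``defect'' along the lines you propose does not seem to exist.
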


This paper is organized as follows: In Section \ref{sec2} we derive the equations for the centered co-circular CC. Following the approach of  Cors et al.\cite{CORS201444}, we can see  the centered co-circular CC as the unique minimum of a new function for any given positive masses (see Lemma \ref{unique1}).
In Section \ref{sec3}, we provide some symmetry results and lemmas of  centered co-circular CCs by  the  uniqueness of minimum.
We also give a positive answer to Chenciner's question for any power-law potential in the 3 and 4-body problem. 
In Section \ref{sec4} we prove Theorem \ref{main1} based on the lemmas in Section \ref{sec3}.
\section{Equations for centered co-circular central configurations}\label{sec2}
We suppose that  $n$ positive masses $\vec{m}=(m_{1},m_{2},\dots,m_{n})^{T}\in\mathbb{R}_{+}^{n}$ lie on a unit circle centered at origin in the complex plane.
Their positions are  $\vec{q}=(q_{1},q_{2},\dots,q_{n})^{T}\in  \mathbb{C}^{n}$ where $q_{j}=e^{\sqrt{-1}\theta_{j}}=\cos \theta_{j}+\sqrt{-1}\sin \theta_{j}$.
Without loss of generality, let $\theta_{j}\in (0,2\pi]$ and in order to avoid collisions,  we suppose
$$\vec{\theta}=(\theta_{1},\theta_{2},\dots,\theta_{n})^{T}\in \mathcal{K}=\{\vec{\theta}\in\mathbb{R}^{n}: 0< \theta_{1}<\theta_{2}<\dots<\theta_{n}\leq 2\pi\}.$$
Thus when the mass vector $\vec{m}$ is given, their order is also determined. 
Under these notations, 
the potential $U_{\alpha}$ can be written as
 $$U_{\alpha}(\vec{m},\vec{\theta})=\sum_{j<k}\frac{m_{j}m_{k}}{r_{jk}^{\alpha}(\vec{\theta})}=2^{-\alpha}\sum_{j<k}\frac{m_{j}m_{k}}{|\sin^{\alpha}\frac{\theta_{k}-\theta_{j}}{2}|},$$
where 
$$r_{jk}=|q_{j}-q_{k}|=|2\sin \frac{\theta_{j}-\theta_{k}}{2}|=\sqrt{2-2\cos(\theta_{j}-\theta_{k})}$$
is the mutual distance  between $q_{j}$ and $q_{k}$.
Then by \eqref{eqcc0}, they form a centered co-circular CC
if and only if 
\begin{equation*}\label{eqcc1}
\left\{\begin{aligned}
&\sum_{j\neq k}^{n}\frac{m_{j}(1-q_{j}/q_{k})}{r_{jk}^{\alpha+2}}=\frac{\lambda}{\alpha},\ \ k=1,2,\dots,n,\\
&\sum_{j=1}^{n}m_{j}q_{j}=0.
\end{aligned}\right.
\end{equation*}
Actually the last equations can also be deduced from the first $n$ equations by multiplying each of them by $m_{k}q_{k}$ and taking summation. 
Writing their real and imaginary part separately, we have
$$\left\{
\begin{aligned}
&\alpha m_{k}\sum_{j\neq k}^{n}\frac{m_{j}}{r_{jk}^{\alpha+2}}\sin(\theta_{j}-\theta_{k})=0,\\
&\sum_{j\neq k}^{n}\frac{m_{j}}{r_{jk}^{\alpha}}=\tilde{\lambda},\\
&\sum_{j=1}^{n}m_{j}\cos\theta_{j}=\sum_{j=1}^{n}m_{j}\sin\theta_{j}=0,
\end{aligned}
\right.
$$
where $\tilde{\lambda}=\frac{2\lambda}{\alpha} {=\frac{2U_{\alpha}}{M}}$.
Seeing $U_{\alpha}(\vec{m},\vec{\theta})=\sum_{j<k}\frac{m_{j}m_{k}}{r_{jk}^{\alpha}(\vec{\theta})}$ as a function of varibles $\vec{m},\vec{\theta}$, 
and noticing that
$$U_{-2}(\vec{m},\vec{\theta})=\sum_{j<k}m_{j}m_{k}r_{ij}^{2}=\sum_{j<k}m_{j}m_{k}(2-2\cos(\theta_{j}-\theta_{k})),$$
these equations are equivalent to
\begin{equation*}
\frac{\partial}{\partial \theta_{k}}U_{\alpha}=0,
\ \ \ 
\frac{\partial}{\partial m_{k}}U_{\alpha}=\tilde{\lambda}, 
\ \ \ 
k=1,2,\dots,n,
\end{equation*}
\begin{equation*}
\frac{\partial}{\partial \theta_{k}}U_{-2}=0,
\ \ \ 
\frac{\partial}{\partial m_{k}}U_{-2}=2M, 
\ \ \ 
k=1,2,\dots,n,
\end{equation*}
or more compactly 
\begin{equation}\label{ccc}
\nabla_{\vec{\theta}} U_{\alpha}|_{(\vec{m},\vec{\theta})}=\vec{0},
\ \ \ 
\nabla_{\vec{m}} U_{\alpha}|_{(\vec{m},\vec{\theta})}=\tilde{\lambda}\vec{1},
\end{equation}
\begin{equation}\label{ccc1}
\nabla_{\vec{\theta}} U_{-2}|_{(\vec{m},\vec{\theta})}=\vec{0},
\ \ \ 
\nabla_{\vec{m}} U_{-2}|_{(\vec{m},\vec{\theta})}=2M\vec{1},
\end{equation}
where the vectors $\vec{0}=(0,0,\dots,0)^{T}$ and $\vec{1}=(1,1,\dots,1)^{T}$.
Thus $\vec{m},\vec{\theta}$ form a centered co-circular central configuration if and only if 
the equations \eqref{ccc} and \eqref{ccc1} hold
 (if and only if 
the equations \eqref{ccc} hold). 
So if $\vec{m},\vec{\theta}$ satisfy \eqref{ccc} and \eqref{ccc1}, we simply denote 
$$(\vec{m},\vec{\theta})\in \mathcal{CC}.$$
Cors et al.\cite{CORS201444}  notice that $\nabla_{\vec{\theta}} U_{\alpha}=\vec{0}$ means  $\vec{\theta}$ is a critical points of $U_{\alpha}$ when the positive mass vector $\vec{m}$ is given. 
 By computing the corresponding Hessian matrix of $U_{\alpha}$, they showed that this critical point is a minimum and is unique up to translation which can be removed by specifying $\theta_{n}=2\pi$.
Let
$$\mathcal{K}_{0}=\{\vec{\theta}\in\mathcal{K}|\theta_{n}=2\pi\},\ \ \ \ 
\mathcal{CC}_{0}=\{(\vec{m},\vec{\theta})\in \mathcal{CC}|\vec{\theta}\in\mathcal{K}_{0}\},$$
their result can be expressed as follows:
\begin{lemma}\label{unique}
For any given positive mass vector $\vec{m}\in\mathbb{R}_{+}^{n}$, there exists a unique $\vec{\varphi}_{\vec{m}}\in\mathcal{K}_{0}$ such that
 $\nabla_{\vec{\theta}} U_{\alpha}{(\vec{m},\vec{\varphi}_{\vec{m}})}=\vec{0}$. Moreover, this critical point is a minimum.
\end{lemma}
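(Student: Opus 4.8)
The plan is to fix the positive mass vector $\vec m$ and study $V(\vec\theta):=U_{\alpha}(\vec m,\vec\theta)$ as a smooth function on the open set $\mathcal K_{0}$; the existence, uniqueness, and minimality assertions then all follow from two facts: that $V$ is proper on $\mathcal K_{0}$ and that its restriction to $\mathcal K_{0}$ is strictly convex. For properness, note that $\mathcal K_{0}$ sits in the hyperplane $\theta_{n}=2\pi$ and, in the coordinates $(\theta_{1},\dots,\theta_{n-1})$, is the bounded open convex polytope $\{0<\theta_{1}<\theta_{2}<\dots<\theta_{n-1}<2\pi\}$. Every boundary point exhibits at least one of the degeneracies $\theta_{1}\to0$, $\theta_{i}-\theta_{i-1}\to0$ for some $2\le i\le n-1$, or $\theta_{n-1}\to2\pi$; since $q_{n}=e^{\sqrt{-1}\,2\pi}=1$, each of these forces a mutual distance ($r_{1n}$, $r_{i-1,i}$, or $r_{n-1,n}$ respectively) to tend to $0$, and hence $V\to+\infty$ because $\alpha>0$. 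As $V$ is continuous and positive on $\mathcal K_{0}$, it attains a global minimum at some interior point $\vec\varphi_{\vec m}$, which is in particular a critical point.

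For the strict convexity, write $V(\vec\theta)=2^{-\alpha}\sum_{j<k}m_{j}m_{k}\,g(\theta_{k}-\theta_{j})$ with $g(\psi)=\big(\sin\frac{\psi}{2}\big)^{-\alpha}$ on $(0,2\pi)$. A direct differentiation gives
$$g''(\psi)=\frac{\alpha(\alpha+1)}{4}\left(\sin\frac{\psi}{2}\right)^{-\alpha-2}\cos^{2}\frac{\psi}{2}+\frac{\alpha}{4}\left(\sin\frac{\psi}{2}\right)^{-\alpha},$$
which is strictly positive on $(0,2\pi)$ precisely because $\alpha>0$. Since each summand of $V$ depends on $\vec\theta$ only through the linear functional $\theta_{k}-\theta_{j}$, the Hessian is $D^{2}V=2^{-\alpha}\sum_{j<k}m_{j}m_{k}\,g''(\theta_{k}-\theta_{j})(e_{k}-e_{j})(e_{k}-e_{j})^{T}$, where $e_{k}-e_{j}$ denotes the vector with $1$ in slot $k$ and $-1$ in slot $j$. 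Hence for any $v$ we have $v^{T}D^{2}V\,v=2^{-\alpha}\sum_{j<k}m_{j}m_{k}\,g''(\theta_{k}-\theta_{j})(v_{k}-v_{j})^{2}\ge0$, with equality if and only if all the $v_{k}$ coincide, i.e. $v\in\operatorname{span}(\vec 1)$ — which just reflects the translation invariance $V(\vec\theta+c\vec1)=V(\vec\theta)$. On $\mathcal K_{0}$ the tangent directions are the $v$ with $v_{n}=0$, and these exclude every nonzero multiple of $\vec1$; therefore $D^{2}V$ is positive definite along $\mathcal K_{0}$ at every point, so $V|_{\mathcal K_{0}}$ is strictly convex on the convex domain $\mathcal K_{0}$.

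It remains to combine these. A strictly convex $C^{2}$ function on an open convex set has at most one critical point: if $\nabla V(a)=\nabla V(b)=0$ with $a\ne b$ in $\mathcal K_{0}$, then $t\mapsto\frac{d}{dt}V(a+t(b-a))$ would be strictly increasing on $[0,1]$ yet vanish at both endpoints, a contradiction; and any critical point of a convex function is automatically a global minimizer. Together with the minimizer $\vec\varphi_{\vec m}$ produced by the properness argument, this yields exactly one critical point, and it is the minimum. I do not expect a genuine obstacle here — this is essentially the Hessian computation of Cors et al.~\cite{CORS201444} — the only two points demanding care being the sign of $g''$, which is exactly where $\alpha>0$ is used (for $-1<\alpha<0$ one finds instead $g''<0$), and the bookkeeping of the one-dimensional kernel $\operatorname{span}(\vec1)$, where one must check that pinning $\theta_{n}=2\pi$ simultaneously kills this kernel direction and keeps the domain convex and bounded so the properness step still applies.
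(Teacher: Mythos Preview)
Your proof is correct and follows essentially the same route as the paper (and Cors et al.~\cite{CORS201444}): blow-up at $\partial\mathcal K_{0}$ for existence, and a Hessian computation showing positive semidefiniteness with one-dimensional kernel $\operatorname{span}(\vec 1)$ for uniqueness. Your presentation of $D^{2}V$ as a positive combination of the rank-one matrices $(e_{k}-e_{j})(e_{k}-e_{j})^{T}$ is a slight repackaging of the paper's diagonally-dominant argument, but the content is the same.
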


They also figure out that for any given positive mass vector $\vec{m}$,  a centered co-circular central configuration  (if exists), namely  $\vec{\theta}$, should be the unique minimum of the function $U_{\alpha}$. That is to say 
$$(\vec{m},\vec{\theta})\in \mathcal{CC}_{0}\Rightarrow\vec{\theta}=\vec{\varphi_{m}}.$$
 Let $$f_{K}(\vec{m},\vec{\theta})=U_{\alpha}+\frac{U_{-2}}{K}, $$ 
 where $K\geq \frac{2^{3+\alpha}}{\alpha}$ is some constant, we can extend the above lemma a little bit: 
\begin{lemma}\label{unique1}
For any given positive mass vector $\vec{m}\in\mathbb{R}_{+}^{n}$ and  real number $K\geq \frac{2^{3+\alpha}}{\alpha}$, there exists a unique $\vec{\theta}_{\vec{m}}\in\mathcal{K}_{0}$ such that
 $\nabla_{\vec{\theta}} f_{K}{(\vec{m},\vec{\theta}_{\vec{m}})}=\vec{0}$. Moreover, this critical point is a minimum and $(\vec{m},\vec{\theta})\in \mathcal{CC}_{0}$ implies $\vec{\theta}=\vec{\theta_{m}}$.
\end{lemma}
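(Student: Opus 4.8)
The plan is to mimic and slightly strengthen the argument of Cors et al.\ behind Lemma \ref{unique}: for a fixed positive mass vector $\vec{m}$ I will show that $\vec{\theta}\mapsto f_K(\vec{m},\vec{\theta})$ is \emph{strictly} convex on the bounded convex open set $\mathcal{K}_0$ and tends to $+\infty$ at its boundary. Uniqueness of the critical point, and the fact that it is the global minimum, then follow at once. The implication ``$(\vec{m},\vec{\theta})\in\mathcal{CC}_0\Rightarrow\vec{\theta}=\vec{\theta}_{\vec{m}}$'' is immediate: by \eqref{ccc} and \eqref{ccc1} a centered co-circular CC satisfies $\nabla_{\vec{\theta}}U_\alpha=\nabla_{\vec{\theta}}U_{-2}=\vec{0}$, hence $\nabla_{\vec{\theta}}f_K=\nabla_{\vec{\theta}}U_\alpha+K^{-1}\nabla_{\vec{\theta}}U_{-2}=\vec{0}$, so $\vec{\theta}$ is the (unique) critical point of $f_K(\vec{m},\cdot)$.

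First I would write $f_K$ as a sum over pairs of a single one-variable profile,
\[
f_K(\vec{m},\vec{\theta})=\sum_{j<k}m_jm_k\,F(\theta_k-\theta_j),\qquad
F(\phi)=\frac{1}{2^{\alpha}}\Big(\sin\tfrac{\phi}{2}\Big)^{-\alpha}+\frac{1}{K}\big(2-2\cos\phi\big),\quad \phi\in(0,2\pi),
\]
and compute
\[
F''(\phi)=\frac{\alpha(\alpha+1)}{2^{\alpha+2}}\,\frac{\cos^{2}(\phi/2)}{\sin^{\alpha+2}(\phi/2)}
+\frac{\alpha}{2^{\alpha+2}}\,\frac{1}{\sin^{\alpha}(\phi/2)}+\frac{2\cos\phi}{K}.
\]
Since $0<\sin(\phi/2)\leq1$ on $(0,2\pi)$, the middle term is $\geq\alpha\,2^{-\alpha-2}$, the first term is $\geq0$, and $2\cos\phi/K\geq-2/K$; hence $F''(\phi)\geq\alpha\,2^{-\alpha-2}-2K^{-1}\geq0$ whenever $K\geq 2^{3+\alpha}/\alpha$. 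The three lower bounds are attained simultaneously \emph{only} at $\phi=\pi$, so $F''(\phi)=0$ can occur only there, and only in the borderline case $K=2^{3+\alpha}/\alpha$.

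Next, identify $\mathcal{K}_0$ with the bounded open convex set $\{0<\theta_1<\dots<\theta_{n-1}<2\pi\}\subset\mathbb{R}^{n-1}$. For fixed $\vec{m}$ the Hessian of $f_K(\vec{m},\cdot)$ is of the usual weighted-Laplacian type,
\[
v^{T}\,\mathrm{Hess}_{\vec{\theta}}f_K\,v=\sum_{j<k}m_jm_k\,F''(\theta_k-\theta_j)\,(v_j-v_k)^{2}\ \geq\ 0,
\]
so $f_K(\vec{m},\cdot)$ is convex. If this form vanishes at some $\vec{\theta}$, then $v_j=v_k$ for every pair with $F''(\theta_k-\theta_j)>0$, that is, for every non-antipodal pair; since the $\theta_j$ are distinct, each index has at most one antipode among the others, so the non-antipodal pairs form a graph on $\{1,\dots,n\}$ of minimum degree $\geq n-2\geq(n-1)/2$ (using $n\geq3$), hence connected. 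Therefore $v$ is constant, and since the tangent directions of $\mathcal{K}_0$ satisfy $v_n=0$ we get $v=0$; thus $f_K(\vec{m},\cdot)$ is strictly convex. Finally, because $\alpha>0$, as $\vec{\theta}$ approaches $\partial\mathcal{K}_0$ some mutual distance $r_{jk}\to0$ and the $U_\alpha$-part of $f_K$ tends to $+\infty$ while $U_{-2}$ stays bounded, so $f_K(\vec{m},\cdot)$ is proper and bounded below on $\mathcal{K}_0$; hence it attains its minimum at an interior point $\vec{\theta}_{\vec{m}}$, which is a critical point, and strict convexity makes it the unique one and the global minimum.

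The step I expect to be the main obstacle is pinning down the sharp threshold $K=2^{3+\alpha}/\alpha$: one must verify that the infimum of $F''$ over $(0,2\pi)$ equals $\alpha\,2^{-\alpha-2}$, that it is attained precisely at the antipodal angle $\phi=\pi$, and then, exactly at $K=2^{3+\alpha}/\alpha$, recover strict convexity from the connectivity of the non-antipodal graph rather than from pointwise positivity of $F''$. The remaining ingredients --- convexity of $\mathcal{K}_0$, the Hessian identity, and properness of $f_K$ near the collision boundary --- are routine.
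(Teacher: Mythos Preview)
Your proof is correct and follows the same route as the paper's: both compute the $\vec\theta$-Hessian of $f_K$, show it is positive semidefinite with kernel spanned by $\vec{1}$ precisely when $K\ge 2^{3+\alpha}/\alpha$ (the paper phrases this as diagonal dominance, you as a weighted-Laplacian identity $v^{T}\mathrm{Hess}\,f_K\,v=\sum_{j<k}m_jm_kF''(\theta_k-\theta_j)(v_j-v_k)^2$), and then combine with properness at the collision boundary. If anything, your connectivity argument for the non-antipodal graph handles the borderline case $K=2^{3+\alpha}/\alpha$ a shade more carefully than the paper's Remark~\ref{remark1}, which tacitly assumes all off-diagonal Hessian entries are strictly negative.
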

\begin{proof}
For  any given number $\beta\neq0$, we compute that
$$\left\{\begin{aligned}
&\frac{\partial^{2}}{\partial \theta_{i}\partial \theta_{j}}U_{\beta}=\frac{-\beta m_{i}m_{j}(1+\beta \cos^{2}\frac{\theta_{j}-\theta_{i}}{2})}{|2\sin\frac{\theta_{j}-\theta_{i}}{2}|^{\beta+2}},&if\ i\neq j\\
&\frac{\partial^{2}}{\partial \theta_{i}^{2}}U_{\beta}=-\sum_{j\neq i}^{n}\frac{\partial^{2}}{\partial \theta_{i}\partial \theta_{j}}U_{\beta}.
\end{aligned}\right.$$
So we have 
$$\frac{\partial^{2}}{\partial \theta_{i}^{2}}f_{K}=-\sum_{j\neq i}^{n}\frac{\partial^{2}}{\partial \theta_{i}\partial \theta_{j}}f_{K},$$
and for $i\neq j$
\begin{align*}
\frac{\partial^{2}}{\partial \theta_{i}\partial \theta_{j}}f_{K}
&=m_{i}m_{j}[\frac{-\alpha (1+\alpha \cos^{2}\frac{\theta_{j}-\theta_{i}}{2})}{|2\sin\frac{\theta_{j}-\theta_{i}}{2}|^{\alpha+2}}+\frac{2-4\cos^{2}\frac{\theta_{j}-\theta_{i}}{2}}{K}]\\
&\leq m_{i}m_{j}[\frac{-\alpha (1+\alpha \cos^{2}\frac{\theta_{j}-\theta_{i}}{2})}{2^{\alpha+2}}+\frac{2-4\cos^{2}\frac{\theta_{j}-\theta_{i}}{2}}{K}].
\end{align*}
When $K\geq \frac{2^{3+\alpha}}{\alpha}$, we have 
$$\frac{\partial^{2}}{\partial \theta_{i}\partial \theta_{j}}f_{K}\leq m_{i}m_{j}[\frac{-\alpha (1+\alpha \cos^{2}\frac{\theta_{j}-\theta_{i}}{2})}{2^{\alpha+2}}+\frac{\alpha}{2^{2+\alpha}}-\frac{4\cos^{2}\frac{\theta_{j}-\theta_{i}}{2}}{K}]\leq0.$$
Thus the Hessian matrix $D^{2}f_{K}=(\frac{\partial^{2}f_{K}}{\partial \theta_{i}\partial \theta_{j}})_{n\times n}$  is  {diagonally dominant}. So the quadratic form $u^{T}(D^{2}f_{K})u\geq0$, where the equality holds if and only if $u$ is a scalar multiple of $\vec{1}$ (see Remark \ref{remark1} below).
This vector corresponds to the translational invariance of $f_{K}$, which can be removed by specifying $\theta_{n}=2\pi$.
Since $\lim_{\vec{\theta}\rightarrow \partial\mathcal{K}_{0}}f_{K}(\vec{m},\vec{\theta})=+\infty$, there exists a unique critical point (which is a minimum) $\vec{\theta}_{\vec{m}}\in\mathcal{K}_{0}$ of 
 $ f_{K}$.

So if  $(\vec{m},\vec{\theta})\in \mathcal{CC}_{0}$ is a centered co-circular CC, \eqref{ccc} and \eqref{ccc1} are satisfied and thus $\vec{\theta}=\vec{\theta_{m}}$ is the unique minimum  of the  function  $f_{K}, K\geq \frac{2^{3+\alpha}}{\alpha}$. 
We point out that because \eqref{ccc1} can be deduced from \eqref{ccc},  $\vec{\varphi_{m}}$ in Lemma \ref{unique} is equal to $\vec{\theta_{m}}$ in this lemma.
%

\end{proof}
\begin{remark}\label{remark1}
We say $A=(a_{ij})_{n\times n}$ is a   diagonally dominant matrix, if $|a_{ii}| \geq \sum_{j\neq i} |a_{ij}|$  for $i=1,2,\dots,n$. A symmetric diagonally dominant real matrix  with nonnegative diagonal entries is positive semidefinite.
Moreover, if the off-diagonal entries of the symmetric real matrix $A$ are non-positive  and $a_{ii} =- \sum_{j\neq i} a_{ij}>0$  for $i=1,2,\dots,n$ (i.e., $A\vec{1}=0$), 
then $A$ is positive semidefinite and  the vector  $\vec{1}$ is its unique zero eigenvector.
To see the uniqueness, we suppose there is another vector $\vec{v}=(v_{1},\dots,v_{n})^{T}$ not parallels to $\vec{1}$ such that $A\vec{v}=\vec{0}$.
Then we can choose $j_{0}$ such that $v_{j_{0}}\geq v_{j},\forall j=1,2,\dots,n$ and $v_{j_{0}}> v_{j_{1}}$ for some $j_{1}$. 
It is not difficult to check that the $j_{0}$-th element of $A\vec{v}$ is strictly greater than zero, contradicts with $A\vec{v}=\vec{0}$.

\end{remark}

\section{Symmetry of centered co-circular central configurations}\label{sec3}

In this section, we study the symmetry of centered co-circular CCs.
It is  intuitional that a reflection or a cyclic permutation of the co-circular CC is also a co-circular CC. More precisely, if we denote
$$P=
         \begin{pmatrix}
         0 & 1 & 0 & \ldots & 0& 0 \\
         0 & 0 & 1 & \ldots & 0& 0 \\
         . & . & . & \ldots & .& . \\
          0 & 0 & 0 & \ldots & 0& 1 \\
         1 & 0 & 0 & \ldots & 0& 0
                 \end{pmatrix}     \ 
S=
         \begin{pmatrix}
         0 & 0  & \ldots & 0 & 1& 0 \\
         0 & 0  & \ldots & 1& 0& 0 \\
         . & . &  \ldots & . &.& . \\
          1 & 0  & \ldots& 0 & 0& 0 \\
         0 & 0  & \ldots & 0& 0& 1
                 \end{pmatrix},$$
and let $G=<P,S>$   be the dihedral group generated by the matrix $P$ and $S$.             
Then for all $ g \in G $, we have
$$(\vec{m},\vec{\theta})\in \mathcal{CC}\Leftrightarrow(g\vec{m},g\vec{\theta})\in \mathcal{CC}.$$
This is because  \eqref{ccc} and \eqref{ccc1} are invariant if we take the same permutation on indices of $\vec{m}$ and $\vec{\theta}$ simultaneously.
Moreover, to restrict the angles $\vec{\theta}\in \mathcal{K}_{0}$, we can let           
$$\mathcal{P}=
         \begin{pmatrix}
         -1 & 1 & 0 & \ldots & 0& 0 \\
         -1 & 0 & 1 & \ldots & 0& 0 \\
         . & . & . & \ldots & .& . \\
          -1 & 0 & 0 & \ldots & 0& 1 \\
         0 & 0 & 0 & \ldots & 0& 1
                 \end{pmatrix}\       
\mathcal{S}=
         \begin{pmatrix}
         0 & 0  & \ldots & 0 & -1& 1 \\
         0 & 0  & \ldots & -1& 0& 1 \\
         . & . &  \ldots & . &.& . \\
          -1 & 0  & \ldots& 0 & 0& 1 \\
         0 & 0  & \ldots & 0& 0& 1
                 \end{pmatrix},$$       
then $\vec{\theta}\in \mathcal{K}_{0}$   if and only if $\mathcal{P}^{h}\mathcal{S}^{l}\vec{\theta}\in \mathcal{K}_{0}, \forall h,l\in \mathbb{Z}.$   
For any $g=P^{h}S^{l}\in G$, letting $\hat{ g}=\mathcal{P}^{h}\mathcal{S}^{l}$, we can define the group representation         
$$g\cdot (\vec{m},\vec{\theta})=(g\vec{m},\hat{ g}\vec{\theta}).$$ 
 We also recall that for given positive mass vector $\vec{m}$,  the unique minimum of $f_{K}|_{\mathcal{K}_{0}}$
   always exists (see Lemma \ref{unique1}), and we denote it as  $\vec{\theta}_{\vec{m}}\in\mathcal{K}_{0}$. 
It will  be a centered co-circular CC if it also satisfies the second part of equations \eqref{ccc} and \eqref{ccc1}.

\begin{lemma}\label{SPCC}
Suppose $(\vec{m},\vec{\theta_m})\in \mathcal{CC}_{0}$ form a centered co-circular central configuration, then
\begin{enumerate}
\item
 $g\cdot(\vec{m},\vec{\theta_m})\in \mathcal{CC}_{0}, \forall g\in G$;
 \item  $f_{K}(\vec{m},\vec{\theta_m})=f_{K}(g\vec{m},\hat{g}\vec{\theta_m})\leq f_{K}(g\vec{m},\vec{\theta_m})$ and $\hat{g}\vec{\theta_m}=\vec{\theta_{gm}}$;
 \item 
  $\vec{m}=g\vec{m}$ implies  $\vec{\theta_m}=\hat{g}\vec{\theta_m}$.
  \end{enumerate}
 \end{lemma}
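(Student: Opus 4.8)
The plan is to derive all three parts from two ingredients: the invariance of the defining equations \eqref{ccc}--\eqref{ccc1} under relabelling, rotation and reflection, and the uniqueness of the minimizer in Lemma~\ref{unique1}; no estimate is needed.

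For part (1) I would argue geometrically. A pair $(\vec{m},\vec{\theta})$ with $\vec{\theta}\in\mathcal{K}_{0}$ describes $n$ labelled point masses on the unit circle, and it lies in $\mathcal{CC}_{0}$ exactly when this configuration is a centered co-circular central configuration. That property is unchanged by (a) relabelling the bodies by some $g\in G$, which acts as the simultaneous permutation $(\vec{m},\vec{\theta})\mapsto(g\vec{m},g\vec{\theta})$; (b) rotating the circle; and (c) reflecting the circle — each of these preserves every mutual distance $r_{jk}$, the relation \eqref{eqcc0}, and the center of mass, which stays at the origin. Next I would check, by inspecting the matrices $\mathcal{P}$ and $\mathcal{S}$, that for $\vec{\theta}\in\mathcal{K}_{0}$ and $g=P^{h}S^{l}$ the vector $\hat{g}\vec{\theta}=\mathcal{P}^{h}\mathcal{S}^{l}\vec{\theta}$ is precisely the configuration $g\vec{\theta}$ followed by a rotation (if $l$ is even) or by a rotation composed with the reflection $\vec{\theta}\mapsto-\vec{\theta}$ (if $l$ is odd), reparametrized so that the last angle equals $2\pi$; this is also why $\hat{g}\vec{\theta}\in\mathcal{K}_{0}$. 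Combining, $(\vec{m},\vec{\theta_m})\in\mathcal{CC}_{0}$ forces $g\cdot(\vec{m},\vec{\theta_m})=(g\vec{m},\hat{g}\vec{\theta_m})\in\mathcal{CC}_{0}$. Equivalently at the level of the equations: $U_{\alpha}$ and $U_{-2}$ are symmetric in $\{j,k\}$ and depend on $\vec{\theta}$ only through the $2\pi$-periodic, translation- and reflection-invariant quantities $|\sin\frac{\theta_{j}-\theta_{k}}{2}|$, so $U_{\beta}(g\vec{m},\hat{g}\vec{\theta})=U_{\beta}(\vec{m},\vec{\theta})$ for $\beta\in\{\alpha,-2\}$ and $\vec{\theta}\in\mathcal{K}_{0}$; differentiating, the systems \eqref{ccc} and \eqref{ccc1} are carried into themselves by $(\vec{m},\vec{\theta})\mapsto(g\vec{m},\hat{g}\vec{\theta})$.

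For part (2), the equality $f_{K}(\vec{m},\vec{\theta_m})=f_{K}(g\vec{m},\hat{g}\vec{\theta_m})$ is just the identity $U_{\beta}(g\vec{m},\hat{g}\vec{\theta_m})=U_{\beta}(\vec{m},\vec{\theta_m})$ above, taken for $\beta=\alpha$ and $\beta=-2$ and combined with weights $1$ and $1/K$. For the rest: by part (1) we have $(g\vec{m},\hat{g}\vec{\theta_m})\in\mathcal{CC}_{0}$, so the last assertion of Lemma~\ref{unique1}, applied with the mass vector $g\vec{m}$, forces $\hat{g}\vec{\theta_m}$ to equal the unique minimizer of $f_{K}(g\vec{m},\cdot)$ over $\mathcal{K}_{0}$, namely $\vec{\theta_{gm}}$; this gives $\hat{g}\vec{\theta_m}=\vec{\theta_{gm}}$. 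Since $\vec{\theta_m}\in\mathcal{K}_{0}$, the minimizing property then yields $f_{K}(g\vec{m},\hat{g}\vec{\theta_m})=f_{K}(g\vec{m},\vec{\theta_{gm}})\le f_{K}(g\vec{m},\vec{\theta_m})$, which is the inequality.

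Part (3) is immediate from part (2): if $\vec{m}=g\vec{m}$ then $\vec{\theta_{gm}}=\vec{\theta_m}$, hence $\hat{g}\vec{\theta_m}=\vec{\theta_{gm}}=\vec{\theta_m}$. The one place that needs genuine care is the bookkeeping in part (1) — verifying that, on the fundamental domain $\mathcal{K}_{0}$, the linear action $\hat{g}$ really implements ``relabel by $g$, then rotate or reflect back into $\mathcal{K}_{0}$'', so that the potentials, and therefore the central-configuration equations \eqref{ccc} and \eqref{ccc1}, are left invariant; everything after that is a formal consequence of Lemma~\ref{unique1}.
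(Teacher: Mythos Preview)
Your proposal is correct and follows essentially the same route as the paper: establish that the action $(\vec{m},\vec{\theta})\mapsto(g\vec{m},\hat{g}\vec{\theta})$ preserves the potentials and hence the systems \eqref{ccc}--\eqref{ccc1}, then use Lemma~\ref{unique1} to identify $\hat{g}\vec{\theta_m}$ with the unique minimizer $\vec{\theta_{gm}}$ and deduce the inequality and part (3). The only cosmetic difference is that the paper derives part (3) from the ``equality iff $\vec{\theta_m}=\hat{g}\vec{\theta_m}$'' clause of the inequality, whereas you use $\hat{g}\vec{\theta_m}=\vec{\theta_{gm}}=\vec{\theta_m}$ directly; both are fine.
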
                 
  \begin{proof}
For $\vec{\theta}\in\mathcal{K}_{0}$ which means $0< \theta_{1}<\theta_{2}<\dots<\theta_{n}= 2\pi$, we have
  $$\mathcal{P}\vec{\theta}=(\theta_{2}-\theta_{1},\theta_{3}-\theta_{1},\dots,\theta_{n}-\theta_{1},\theta_{1}-\theta_{1}+2\pi)\in\mathcal{K}_{0},$$
  $$\mathcal{S}\vec{\theta}=(2\pi-\theta_{n-1},2\pi-\theta_{n-2},\dots,2\pi-\theta_{1},2\pi)\in\mathcal{K}_{0}.$$
They are the rotation or reflection on indices of the configuration $\vec{\theta}$ plus a translation.  
Since \eqref{ccc} and \eqref{ccc1} are invariant by this group action, we have
$$(\vec{m},\vec{\theta})\in \mathcal{CC}_{0}\Leftrightarrow(P\vec{m},\mathcal{P}\vec{\theta})\in \mathcal{CC}_{0}\Leftrightarrow (S\vec{m},\mathcal{S}\vec{\theta})\in \mathcal{CC}_{0} \Leftrightarrow g\cdot(\vec{m},\vec{\theta})\in \mathcal{CC}_{0}.$$
Then $$f_{K}(\vec{m},\vec{\theta_m})=f_{K}(g\vec{m},\hat{g}\vec{\theta_m})$$ follows and the uniqueness of the minimum implies that $\hat{g}\vec{\theta_m}=\vec{\theta_{gm}}$.
Applying Lemma \ref{unique1}, we have the following inequality 
  $$f_{K}(\vec{m},\vec{\theta_m})=f_{K}(g\vec{m},\hat{g}\vec{\theta_m})\leq f_{K}(g\vec{m},\vec{\theta_m}),$$
  where the equality holds if and only if $\vec{\theta_m}=\hat{g}\vec{\theta_m}$. 
  So if $\vec{m}=g\vec{m}$, we will always have the equality, then $\vec{\theta_m}=\hat{g}\vec{\theta_m}$. This finishes the proof.
\end{proof}    
This lemma tells us that,  for a centered co-circular CC, the symmetry of the distribution of the masses can imply the the symmetry of the configuration. For example, if $(\vec{m},\vec{\theta_m})\in \mathcal{CC}_{0}$ and  $\vec{m}=P\vec{m}$ (i.e., all the masses are equal),  we will have  $\vec{\theta_m}=\mathcal{P}\vec{\theta_m}$, which is just the equal-mass regular $n$-gon CC.

\begin{corollary}\label{em}
For all  integers $n\geq3$ and real numbers $\alpha>0$, if all the masses are equal, then
the unique centered co-circular central configuration for power-law potential $U_{\alpha}$  is the equal-mass regular $n$-gon central configuration.
\end{corollary}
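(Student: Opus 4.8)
The plan is to deduce Corollary \ref{em} directly from part (3) of Lemma \ref{SPCC}, which has already done essentially all the work. Suppose $(\vec{m},\vec{\theta})\in\mathcal{CC}_0$ is a centered co-circular central configuration with all masses equal, say $\vec m=m\vec 1$ for some $m>0$. The key observation is that $P\vec m=\vec m$, since permuting the entries of the constant vector $\vec 1$ leaves it unchanged. By Lemma \ref{unique1}, $\vec\theta=\vec\theta_{\vec m}$ is forced to be the unique minimizer of $f_K(\vec m,\cdot)$ over $\mathcal K_0$, and then Lemma \ref{SPCC}(3) with $g=P$ gives $\vec\theta=\mathcal P\vec\theta$.

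Next I would translate the equation $\vec\theta=\mathcal P\vec\theta$ into an explicit description of $\vec\theta$. Writing out $\mathcal P\vec\theta$ as in the proof of Lemma \ref{SPCC}, the identity $\mathcal P\vec\theta=\vec\theta$ reads $\theta_{k+1}-\theta_1=\theta_k$ for $k=1,\dots,n-1$ (with $\theta_{n+1}$ interpreted via the last row as $\theta_1+2\pi=\theta_n+\theta_1$, equivalently $\theta_n=2\pi$, which is already built into $\mathcal K_0$). Hence the consecutive gaps $\theta_{k+1}-\theta_k$ are all equal to the common value $\theta_1$, and since $\theta_n=2\pi$ we get $\theta_k=\dfrac{2\pi k}{n}$ for every $k$. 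Geometrically the bodies sit at the vertices of a regular $n$-gon, and since the masses are all equal this is exactly the equal-mass regular $n$-gon central configuration. Conversely the equal-mass regular $n$-gon is well known to be a centered co-circular central configuration (its center of mass is the circle's center by symmetry, and it satisfies \eqref{eqcc0}), so existence is not in question; the content of the corollary is uniqueness, which is what the argument above establishes.

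There is really no serious obstacle here — the corollary is a clean specialization of Lemma \ref{SPCC}. The one point worth stating carefully is why the minimizer produced in Lemma \ref{unique1} is the \emph{only} candidate for $\vec\theta$: this is precisely the last sentence of Lemma \ref{unique1}, namely that $(\vec m,\vec\theta)\in\mathcal{CC}_0$ implies $\vec\theta=\vec\theta_{\vec m}$, which holds for every $K\ge \frac{2^{3+\alpha}}{\alpha}$ and in particular lets us invoke part (3) of Lemma \ref{SPCC}. I would also note explicitly that no hypothesis on $\alpha$ beyond $\alpha>0$ is needed — the summation condition of Theorem \ref{main1} plays no role in the equal-mass case, because the symmetry of the mass vector already pins down the configuration.

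A short proof would therefore read roughly as follows. Let $(\vec m,\vec\theta)\in\mathcal{CC}_0$ with $\vec m$ having all entries equal; then $P\vec m=\vec m$, so by Lemma \ref{SPCC}(3) (applied with $g=P$) we have $\vec\theta=\mathcal P\vec\theta$. Expanding this equation gives $\theta_{k+1}-\theta_k=\theta_1$ for all $k$, and combined with $\theta_n=2\pi$ it yields $\theta_k=2\pi k/n$. Thus $\vec\theta$ is the regular $n$-gon, and together with the equal masses this is the equal-mass regular $n$-gon central configuration, which is the unique centered co-circular central configuration in this case.
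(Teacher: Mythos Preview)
Your proposal is correct and follows exactly the argument the paper gives immediately before stating the corollary: use $P\vec m=\vec m$ together with Lemma~\ref{SPCC}(3) to obtain $\vec\theta_{\vec m}=\mathcal P\vec\theta_{\vec m}$, which forces the regular $n$-gon. You have simply supplied the elementary verification that $\vec\theta=\mathcal P\vec\theta$ yields $\theta_k=2\pi k/n$, which the paper leaves implicit.
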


%

Let the matrix
$$H_{\alpha,\vec{{m}}}=(r_{jk}^{-\alpha}(\vec{\theta_{m}})+\frac{1}{K}r_{jk}^{2}(\vec{\theta_{m}}))_{n\times n}$$
which is determined by the mass vector $\vec{m}$ (seeing $r_{ij}(\vec{\theta})$ as indirect variables)
and consider the function
$$h_{\vec{m}}(\vec{y})=f_{K}(\vec{{y},{\theta}_{{m}}})
=\sum_{j<k}(\frac{y_{j}y_{k}}{r_{jk}^{\alpha}(\vec{\theta})}+\frac{y_{j}y_{k}r_{jk}^{2}(\vec{\theta})}{K})
=\frac{1}{2}\vec{y}^{T}H_{\alpha,\vec{{m}}}\vec{y}, \  \ \  \vec{y}\in \mathbb{R}^{n}.$$
Then $h_{\vec{m}}(\vec{y})$ is a homogeneous polynomial of degree 2.

 If $\vec{m}$ and $\vec{\theta_{m}}$ also satisfy the second part of \eqref{ccc} and  \eqref{ccc1}, i.e., $(\vec{{m},{\theta}_{{\vec{m}}}})\in \mathcal{CC}_{0}$ is a centered co-circular CC,  we have
$$Dh_{\vec{m}}(\vec{m})=\nabla_{\vec{m}}f_{K}(\vec{{m},{\theta}_{{m}}})=(\tilde{\lambda}+\frac{2M}{K})\vec{1}.$$
Writing $h_{\vec{m}}(\vec{y})$ in its Taylor series at the point ${\vec{m}}$, we get
\begin{equation*}\label{taylor}
h_{\vec{m}}(\vec{y})=f_{K}(\vec{{m},{\theta}_{{m}}})+\sum_{j=1}^{n}(\tilde{\lambda}+\frac{2M}{K})(y_{j}-{m}_{j})+
\frac{1}{2}(\vec{y-m})^{T} H_{\alpha,\vec{{m}}} (\vec{y-m}),
\end{equation*}
where $H_{\alpha,\vec{{m}}}$ happens to be  the Hessian of $h_{\vec{m}}(\vec{y})$ at the point $\vec{{m}}$.

So $(\vec{{m},{\theta}_{{\vec{m}}}})\in \mathcal{CC}_{0}$ and $\sum_{j=1}^{n}y_{i}=\sum_{j=1}^{n}m_{i}$ would imply
\begin{equation}\label{eq4}
f_{K}(\vec{{y},{\theta}_{{\vec{m}}}})-f_{K}(\vec{{m},{\theta}_{{\vec{m}}}})=\frac{1}{2}(\vec{y-m})^{T} H_{\alpha,\vec{{m}}} (\vec{y-m})=f_{K}(\vec{{y-m},{\theta}_{{\vec{m}}}}).
\end{equation}
Moreover, for $\vec{y}=g\vec{m}$, we have the following lemma.

\begin{lemma}\label{CCcond}
For the positive mass vector $\vec{m}$, if
 \begin{equation*}\label{cond1}
 \vec{m}^{T}(g-I)^{T} H_{\alpha,\vec{{m}}} (g-I) \vec{m}<0,\ \ \emph{for some} \  g\in  G,
 \end{equation*}
 or 
 \begin{equation*}\label{cond2}
 f_{K}(g\vec{m},\vec{\theta_{m}})<f_{K}(\vec{m},\vec{\theta_{m}}),\ \ \emph{for some} \  g\in  G,
 \end{equation*}
   then there is no centered co-circular central configuration for $\vec{m}$.
\end{lemma}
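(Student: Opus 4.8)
The plan is to argue by contradiction and reduce both hypotheses to the global–minimum property of $\vec{\theta_m}$ proved in Lemma~\ref{SPCC}. Suppose $\vec{m}$ does admit a centered co-circular central configuration. Since $\vec{\theta_m}$ is the unique minimizer of $f_K(\vec{m},\cdot)$ on $\mathcal{K}_0$ (Lemma~\ref{unique1}), that configuration must be $(\vec{m},\vec{\theta_m})\in\mathcal{CC}_0$. I would dispose of the second hypothesis first, since the first reduces to it. By part~(2) of Lemma~\ref{SPCC}, for every $g\in G$ one has $f_K(\vec{m},\vec{\theta_m}) = f_K(g\vec{m},\hat g\vec{\theta_m}) \le f_K(g\vec{m},\vec{\theta_m})$. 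Hence $f_K(g\vec{m},\vec{\theta_m}) < f_K(\vec{m},\vec{\theta_m})$ is impossible for any $g\in G$, which already settles the case of the second displayed condition.

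For the first hypothesis, observe that every $g\in G$ is a word in the permutation matrices $P$ and $S$, hence itself a permutation matrix; consequently $g\vec{m}$ is a positive mass vector with $\sum_j (g\vec{m})_j = \sum_j m_j$. This is exactly what licenses the use of identity~\eqref{eq4} with $\vec{y}=g\vec{m}$ — legitimate precisely because $(\vec{m},\vec{\theta_m})\in\mathcal{CC}_0$, which kills the linear term of the Taylor expansion on the hyperplane $\sum y_j=\sum m_j$. One then gets
$$f_K(g\vec{m},\vec{\theta_m}) - f_K(\vec{m},\vec{\theta_m}) = \frac{1}{2}\,(g\vec{m}-\vec{m})^T H_{\alpha,\vec{m}}\,(g\vec{m}-\vec{m}) = \frac{1}{2}\,\vec{m}^T (g-I)^T H_{\alpha,\vec{m}} (g-I)\vec{m}.$$
If this last quantity is negative for some $g\in G$, then $f_K(g\vec{m},\vec{\theta_m}) < f_K(\vec{m},\vec{\theta_m})$, putting us back in the situation already excluded. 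Either way we reach a contradiction, so $\vec{m}$ has no centered co-circular central configuration.

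I do not expect a genuine obstacle here: the argument is a one-step reduction of both stated conditions to the minimality assertion of Lemma~\ref{SPCC}. The only point deserving a line of care is the applicability of~\eqref{eq4}, namely that $g\vec{m}$ and $\vec{m}$ share the same coordinate sum (true because $G$ consists of permutation matrices) and that $g\vec{m}$ is again a genuinely positive mass vector, so that $\vec{\theta_m}$ and the Hessian $H_{\alpha,\vec{m}}$ are the relevant objects throughout; both are immediate.
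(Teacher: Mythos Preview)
Your proof is correct and follows essentially the same approach as the paper: argue by contradiction from $(\vec{m},\vec{\theta_m})\in\mathcal{CC}_0$, use identity~\eqref{eq4} (with $\vec y=g\vec m$) to see that the two hypotheses are equivalent, and derive a contradiction from the inequality $f_K(\vec{m},\vec{\theta_m})\le f_K(g\vec{m},\vec{\theta_m})$ of Lemma~\ref{SPCC}. Your version is slightly more explicit about why \eqref{eq4} applies (namely that $G$ consists of permutation matrices, so $g\vec m$ has the same coordinate sum as $\vec m$), but the logic is identical.
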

\begin{proof}
We give the proof by contradiction.
Suppose $(\vec{{m},{\theta}_{{\vec{m}}}})\in \mathcal{CC}_{0}$, then the two conditions are equivalent by \eqref{eq4} in the above discussion.
However Lemma  \ref{SPCC} imply that 
$$f_{K}(\vec{m},\vec{\theta_{m}})=f_{K}(g\vec{m},\hat{g}\vec{\theta_{m}})\leq f_{K}(g\vec{m},\vec{\theta_{m}}),$$
 which is a contradiction.

\end{proof}

This lemma is our first criterion  to exclude any positive mass vector $\vec{m}$ which can not form  a centered co-circular CC, although it seems quite hard to compute $\vec{\theta_{m}}$ or the matrix $H_{\alpha,\vec{m}}$.

\begin{lemma}\label{switch}
Suppose n positive masses $m_{1},m_{2},\dots,m_{n}$ form a centered co-circular central configuration, then exchange the positions of any two unequal masses $m_{i},m_{j}$, the value of $f_{K}$ will strictly decrease. 
\end{lemma}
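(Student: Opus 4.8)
The plan is to read ``exchanging the positions of the two unequal masses $m_i$ and $m_j$'' as leaving the configuration $\vec{\theta_m}$ in place and passing from the mass vector $\vec{m}$ to $\tau\vec{m}$, where $\tau$ denotes the transposition of the indices $i$ and $j$; equivalently, one moves the body of mass $m_i$ to the slot occupied by the body of mass $m_j$ and vice versa. What has to be shown is then the strict inequality $f_K(\tau\vec{m},\vec{\theta_m})<f_K(\vec{m},\vec{\theta_m})$. The decisive point is that a transposition preserves the total mass, $\sum_k(\tau\vec{m})_k=\sum_k m_k$, so that the hypothesis $(\vec{m},\vec{\theta_m})\in\mathcal{CC}_0$ lets me apply the quadratic identity \eqref{eq4} with $\vec{y}=\tau\vec{m}$, reducing everything to the sign of a single quadratic form:
$$f_K(\tau\vec{m},\vec{\theta_m})-f_K(\vec{m},\vec{\theta_m})=\tfrac12(\tau\vec{m}-\vec{m})^{T}H_{\alpha,\vec{m}}(\tau\vec{m}-\vec{m}).$$

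Next I would make the right-hand side explicit. The difference vector is $\tau\vec{m}-\vec{m}=(m_j-m_i)(\vec{e}_i-\vec{e}_j)$, where $\vec{e}_i$ is the $i$-th standard basis vector of $\mathbb{R}^n$, so it lies in the line spanned by $\vec{e}_i-\vec{e}_j$. On the other hand, $H_{\alpha,\vec{m}}$ is the symmetric matrix of the homogeneous quadratic form $h_{\vec{m}}(\vec{y})=\sum_{j<k}\bigl(r_{jk}^{-\alpha}(\vec{\theta_m})+K^{-1}r_{jk}^2(\vec{\theta_m})\bigr)y_jy_k$, hence has vanishing diagonal and $(i,j)$-entry equal to $r_{ij}^{-\alpha}(\vec{\theta_m})+K^{-1}r_{ij}^2(\vec{\theta_m})$, a strictly positive number. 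Therefore $(\vec{e}_i-\vec{e}_j)^{T}H_{\alpha,\vec{m}}(\vec{e}_i-\vec{e}_j)=-2\bigl(r_{ij}^{-\alpha}(\vec{\theta_m})+K^{-1}r_{ij}^2(\vec{\theta_m})\bigr)$, and consequently
$$f_K(\tau\vec{m},\vec{\theta_m})-f_K(\vec{m},\vec{\theta_m})=-(m_i-m_j)^2\bigl(r_{ij}^{-\alpha}(\vec{\theta_m})+K^{-1}r_{ij}^2(\vec{\theta_m})\bigr)<0,$$
the strictness coming precisely from $m_i\neq m_j$. If instead one reads ``the value of $f_K$'' as the minimized value $f_K(\tau\vec{m},\vec{\theta}_{\tau\vec{m}})$ obtained after also relaxing the angles, the conclusion only improves, since Lemma \ref{unique1} gives $f_K(\tau\vec{m},\vec{\theta}_{\tau\vec{m}})\le f_K(\tau\vec{m},\vec{\theta_m})$.

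I do not expect a genuine obstacle here: once \eqref{eq4} is available the statement is a two-line computation, the only thing to keep track of being the structure of $H_{\alpha,\vec{m}}$ (zero diagonal, strictly positive off-diagonal entries). The one point that deserves a sentence of justification is the applicability of \eqref{eq4}, which rests on $(\vec{m},\vec{\theta_m})$ satisfying the second halves of \eqref{ccc} and \eqref{ccc1}; this is exactly what it means to be a centered co-circular central configuration and is thus guaranteed by the hypothesis of the lemma. It is worth noting, by contrast with Lemma \ref{CCcond}, that $\tau$ is not required to lie in the dihedral group $G$: the argument is purely about the quadratic form $\vec{y}\mapsto f_K(\vec{y},\vec{\theta_m})$ and does not invoke any symmetry of the central-configuration equations.
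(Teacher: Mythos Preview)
Your proposal is correct and follows exactly the paper's own argument: apply the quadratic identity \eqref{eq4} with $\vec{y}=\tau\vec{m}$ and evaluate the resulting quadratic form using that $H_{\alpha,\vec{m}}$ has zero diagonal and strictly positive off-diagonal entries, obtaining $f_K(\tau\vec{m},\vec{\theta_m})-f_K(\vec{m},\vec{\theta_m})=-(m_i-m_j)^2\bigl(r_{ij}^{-\alpha}+K^{-1}r_{ij}^2\bigr)<0$. Your additional remarks (on the alternative reading via the relaxed minimum, and on $\tau$ not needing to lie in $G$) are correct but go beyond what the paper records.
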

\begin{proof}
Suppose $\vec{m}=(m_{1},m_{2},\dots,m_{n})^{T}$ and $(\vec{m},\vec{\theta_{m}})\in\mathcal{CC}_{0}$.
Let
$$\vec{m'}=({m}_{1},\dots,{m}_{j-1},{m}_{k},{m}_{j+1},\dots,{m}_{k-1},{m}_{j},{m}_{k+1},\dots,m_{n}),$$
by \eqref{eq4} we have
$$\begin{aligned}
h_{\vec{m}}(\vec{m'})&=f_{K}(\vec{{m},{\theta}_{{m}}})+\frac{1}{2}(\vec{m'-{m}}) H_{\alpha,\vec{{m}}}  (\vec{m'-{m}})^{T},\\
&=f_{K}(\vec{{m},{\theta}_{{m}}})+({m}_{k}-{m}_{j})(r_{jk}^{-\alpha}+\frac{1}{K}r_{jk}^{2})({m}_{j}-{m}_{k}),
\end{aligned}$$
which implies 
 $$f_{K}(\vec{{m'},{\theta}_{{m}}})<f_{K}(\vec{{m},{\theta}_{{m}}}),\ \  \emph{if}\ \ \  {m}_{j}\neq{m}_{k}.$$
 \end{proof}
This lemma shows us some positive possibility of Chenciner's question because  the centered co-circular CC is some kind of  minimum by Lemma \ref{unique1}.


\begin{corollary}\label{n=3}
The equal-mass equilateral triangle CC is the unique centered co-circular central configuration for the general power-law potential $3$-body problem.
\end{corollary}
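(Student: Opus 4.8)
The plan is to deduce Corollary~\ref{n=3} directly from the equal-mass case, Corollary~\ref{em}, together with the exchange Lemma~\ref{switch} and the exclusion criterion Lemma~\ref{CCcond}, using the elementary observation that for $n=3$ the dihedral group $G=\langle P,S\rangle$ is the \emph{full} symmetric group $S_{3}$; here $P$ acts as a $3$-cycle on the indices and $S$ as a transposition, and a $3$-cycle together with a transposition generate $S_{3}$.

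First I would argue by contradiction. Suppose $(\vec{m},\vec{\theta_{m}})\in\mathcal{CC}_{0}$ is a centered co-circular central configuration for the power-law $3$-body problem, and suppose the masses are not all equal; choose indices $j\neq k$ with $m_{j}\neq m_{k}$. Since $n=3$, the transposition $(j\,k)$ of positions $j$ and $k$ is realized by an element $g\in G$, and $g\vec{m}=\vec{m'}$ is exactly the mass vector obtained by swapping $m_{j}$ and $m_{k}$. By Lemma~\ref{switch}, exchanging these two unequal masses strictly lowers the value of $f_{K}$ at the fixed angular configuration $\vec{\theta_{m}}$, that is,
$$f_{K}(g\vec{m},\vec{\theta_{m}})=f_{K}(\vec{m'},\vec{\theta_{m}})<f_{K}(\vec{m},\vec{\theta_{m}}).$$
But this is precisely the second hypothesis of Lemma~\ref{CCcond}, whose conclusion is that no centered co-circular central configuration exists for $\vec{m}$ --- contradicting the assumption $(\vec{m},\vec{\theta_{m}})\in\mathcal{CC}_{0}$. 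Hence $m_{1}=m_{2}=m_{3}$, and Corollary~\ref{em} then identifies the configuration as the regular $3$-gon, i.e.\ the equilateral triangle.

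I do not anticipate a real obstacle, since all the analytic content --- convexity and uniqueness of the minimum, the Hessian computation, and the behaviour under the group action --- has already been packaged into Lemmas~\ref{unique1}, \ref{SPCC}, \ref{switch} and \ref{CCcond}. The one point that deserves explicit mention is the group-theoretic fact that every transposition of two masses is realized by an element of $G$ only when $n=3$; for $n\geq4$ one has $G=D_{n}\subsetneq S_{n}$, which is exactly why this short argument settles only the $3$-body problem, whereas the general statement Theorem~\ref{main1} requires the quantitative estimate carried out in Section~\ref{sec4}.
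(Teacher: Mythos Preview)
Your proof is correct and follows essentially the same route as the paper: argue by contradiction, use Lemma~\ref{switch} to show that swapping two unequal masses strictly decreases $f_{K}$, invoke Lemma~\ref{CCcond} for the contradiction, and conclude via Corollary~\ref{em}. The only cosmetic difference is that the paper assumes without loss of generality that $m_{1}\neq m_{2}$ and applies the specific element $S$, whereas you make explicit the underlying reason this works, namely that $G=D_{3}=S_{3}$ so every transposition lies in $G$.
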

\begin{proof}
By Corollary \ref{em}, it is enough to show that all the masses are equal  and
we give the proof by contradiction. Suppose there is a 3-body centered co-circular central configuration $(\vec{m},\vec{\theta_{m}})\in \mathcal{CC}_{0}$ with unequal masses. 
Without loss of generality,  we suppose $m_{1}\neq m_{2}$, then  
$$\vec{m}=(m_{1},m_{2},m_{3})^{T}\neq (m_{2},m_{1},m_{3})^{T}=S\vec{m}.$$
Lemma \ref{switch} implies $f_{K}(S\vec{m},\vec{\theta_{m}})<f_{K}(\vec{m},\vec{\theta_{m}})$, but by Lemma \ref{CCcond} there is no centered co-circular CC for such $\vec{m}$, contradicts.
%
%
\end{proof}

\begin{corollary}
The equal-mass square CC is the unique centered co-circular central configuration for the general power-law potential $4$-body problem. 
\end{corollary}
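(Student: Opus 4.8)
The plan is to transcribe the proof of Corollary~\ref{n=3} almost verbatim, the only genuinely new point being an analysis of which permutations sit inside $G\cong D_{4}$. Assume $(\vec{m},\vec{\theta_m})\in\mathcal{CC}_0$ is a four–body centered co-circular central configuration; by Corollary~\ref{em} it is enough to prove that the four masses are equal, and I argue by contradiction, supposing they are not. Reading off the matrices, $S$ acts as the transposition $(1\,3)$ (fixing $2$ and $4$) and $P^{2}S$ acts as $(2\,4)$ (fixing $1$ and $3$); these are the only two transpositions in $G$, the remaining two reflections acting as the double swaps $(1\,2)(3\,4)$ and $(1\,4)(2\,3)$, and no rotation being a transposition.

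Suppose first that some pair of ``opposite'' masses differs, say $m_{1}\neq m_{3}$ (the case $m_{2}\neq m_{4}$ being symmetric, via $g=P^{2}S$). Then $S\vec{m}$ is obtained from $\vec{m}$ by exchanging the two unequal masses $m_{1},m_{3}$, so Lemma~\ref{switch} gives $f_{K}(S\vec{m},\vec{\theta_m})<f_{K}(\vec{m},\vec{\theta_m})$; since $S\in G$, Lemma~\ref{CCcond} produces a contradiction. This leaves the case $m_{1}=m_{3}$, $m_{2}=m_{4}$; if moreover $m_{1}=m_{2}$ all masses are equal and we are done by Corollary~\ref{em}, so we may assume $m_{1}\neq m_{2}$.

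In this remaining case $\vec{m}=P^{2}\vec{m}$, so Lemma~\ref{SPCC}(3) forces $\mathcal{P}^{2}\vec{\theta_m}=\vec{\theta_m}$; solving this linear fixed–point equation in $\mathcal{K}_{0}$ pins down $\theta_{2}=\pi$, $\theta_{3}=\theta_{1}+\pi$, $\theta_{4}=2\pi$, so the bodies sit at $\pm e^{\sqrt{-1}\theta_{1}},\ \pm 1$ — an inscribed rectangle — with mutual distances $r_{13}=r_{24}=2$ and $r_{12}=r_{34}=s$, $r_{14}=r_{23}=t$, where $s=2\cos\tfrac{\theta_{1}}{2}$, $t=2\sin\tfrac{\theta_{1}}{2}$ and $0<s,t<2$. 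One can then finish in either of two equivalent ways. Either subtract the identities $\sum_{j\neq k}m_{j}r_{jk}^{-\alpha}=\tilde{\lambda}$ for $k=1$ and $k=2$: using the rectangle distances this collapses to $(m_{2}-m_{1})\bigl(s^{-\alpha}+t^{-\alpha}-2^{-\alpha}\bigr)=0$, which is impossible since $m_{1}\neq m_{2}$ and $s<2$ (with $\alpha>0$) force $s^{-\alpha}>2^{-\alpha}$. Or stay inside the $f_{K}$–framework: with $\vec{v}=(1,-1,1,-1)^{T}$ one has $P\vec{m}-\vec{m}=(m_{2}-m_{1})\vec{v}$, so by \eqref{eq4}, $f_{K}(P\vec{m},\vec{\theta_m})-f_{K}(\vec{m},\vec{\theta_m})=(m_{2}-m_{1})^{2}f_{K}(\vec{v},\vec{\theta_m})$, and the rectangle distances give $f_{K}(\vec{v},\vec{\theta_m})=2\bigl[w(2)-w(s)-w(t)\bigr]$ with $w(r)=r^{-\alpha}+r^{2}/K$; since $K\geq 2^{3+\alpha}/\alpha$ makes $w$ strictly decreasing on $(0,2]$ and $s,t<2$, this quantity is negative, and Lemma~\ref{CCcond} applied to $g=P\in G$ again yields a contradiction. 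Hence all four masses are equal, and Corollary~\ref{em} identifies the configuration as the equal–mass square.

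The main obstacle is precisely this last sub–case $m_{1}=m_{3}$, $m_{2}=m_{4}$, $m_{1}\neq m_{2}$: no single transposition of $G$ separates the two distinct mass values, so Lemmas~\ref{switch}--\ref{CCcond} do not apply directly; one must first extract the rectangular shape of $\vec{\theta_m}$ from the symmetry of the mass vector (Lemma~\ref{SPCC}(3)) and then exploit that the diagonals of an inscribed rectangle are its unique longest chords. Everything else is a routine copy of the $n=3$ argument.
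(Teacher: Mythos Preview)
Your proof is correct. The first half --- using the transpositions $S=(1\,3)$ and $P^{2}S=(2\,4)$ together with Lemmas~\ref{switch} and~\ref{CCcond} to force $m_{1}=m_{3}$, $m_{2}=m_{4}$ --- is exactly what the paper does. In the endgame you and the paper diverge slightly. The paper feeds \emph{both} reflection symmetries $\vec{m}=S\vec{m}=P^{2}S\vec{m}$ into Lemma~\ref{SPCC}(3), and the two fixed-point equations $\vec{\theta_m}=\mathcal{S}\vec{\theta_m}=\mathcal{P}^{2}\mathcal{S}\vec{\theta_m}$ together pin $\vec{\theta_m}$ down to the square $(\tfrac{\pi}{2},\pi,\tfrac{3\pi}{2},2\pi)$ directly; it then outsources the final step (``a regular $n$-gon CC forces equal masses'') to the external reference~\cite{Wang2019c}. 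You instead invoke only the rotational symmetry $\vec{m}=P^{2}\vec{m}$, which yields merely a rectangle, and finish by hand via the $\tilde{\lambda}$-equations or the $g=P$ computation in Lemma~\ref{CCcond}. Your route is a little longer but fully self-contained; the paper's is shorter but leans on an outside result. Incidentally, you could have had it both ways: once $m_{1}=m_{3}$ you also know $\vec{m}=S\vec{m}$, and adding $\mathcal{S}\vec{\theta_m}=\vec{\theta_m}$ (i.e.\ $\theta_{1}+\theta_{3}=2\pi$) to your rectangle constraint $\theta_{3}=\theta_{1}+\pi$ immediately gives $\theta_{1}=\pi/2$, the square.
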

\begin{proof}
Suppose $\vec{m}=(m_{1},m_{2},m_{3},m_{4})$ and $(\vec{m},\vec{\theta_{m}})\in \mathcal{CC}_{0},$
then $S\vec{m}=(m_{3},m_{2},m_{1},m_{4})$ and $P^{2}S\vec{m}=(m_{1},m_{4},m_{3},m_{2})$ can also form co-circular central configurations.
Applying Lemma \ref{CCcond} and \ref{switch}, a similar discussion to the proof of Corollary \ref{n=3} implies that $m_{1}=m_{3},m_{2}=m_{4}$, thus $\vec{m}=S\vec{m}=P^{2}S\vec{m}$. 
By Lemma \ref{SPCC}, we have 
$$\vec{\theta}=\mathcal{S}\vec{\theta}=\mathcal{P}^{2}\mathcal{S}\vec{\theta},$$
which implies $\vec{\theta}=(\frac{\pi}{2},\pi,\frac{3\pi}{2},2\pi)$, a square. 
It is an easy exercise to show that  the four masses should be equal to form a CC when the configuration is a square.
There is  also another more general result, Theorem 1 in \cite{Wang2019c},  $n\geq 4$ masses located at the vertices of a regular polygon form a central configuration with homogeneous potential $U_{\alpha} (\alpha\geq0)$ if and only if all the masses are equal. 
\end{proof}

It is more complicated when $n\geq5$ because the order of  masses will  change by just exchanging two unequal masses, unless the mass vector has some highly symmetry. 
Corbera and Valls \cite{CV2019} studies a special case where all the masses are equal except one for the general power-law potential $n$-body problem.  
Using Lemma \ref{CCcond} and \ref{switch}, we can have a much simpler proof.

\begin{corollary}
In the general power-law potential $n$-body problem, there are no centered  co-circular central configurations having all the masses equal except one. 
\end{corollary}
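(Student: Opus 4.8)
The plan is to argue by contradiction, playing Lemma \ref{switch} against Lemma \ref{CCcond}. Suppose $(\vec{m},\vec{\theta}_{\vec{m}})\in\mathcal{CC}_{0}$ is a centered co-circular central configuration in which $n-1$ of the masses equal a common value $a>0$ and exactly one of them, say the one in slot $i$, differs from $a$. Since every $g\in G$ maps $\mathcal{CC}_{0}$ into itself (the first part of Lemma \ref{SPCC}), I would first apply the rotation $P^{i-1}\in G$ to move the distinguished mass into the first slot; thus, with no loss of generality, $\vec{m}=(m_{1},a,a,\dots,a)^{T}$ with $m_{1}\neq a$. Note that no rescaling of the masses is required.

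The heart of the argument is one observation: swapping the first two coordinates of $\vec{m}$ does two things at once. On one hand it interchanges the two \emph{unequal} masses $m_{1}$ and $m_{2}=a$, so Lemma \ref{switch} gives the strict inequality $f_{K}\big((a,m_{1},a,\dots,a)^{T},\vec{\theta}_{\vec{m}}\big)<f_{K}(\vec{m},\vec{\theta}_{\vec{m}})$. On the other hand, \emph{because every coordinate of $\vec{m}$ after the first equals $a$}, the transposed vector is nothing but $P^{-1}\vec{m}=P^{n-1}\vec{m}=(a,m_{1},a,\dots,a)^{T}$, the image of $\vec{m}$ under the rotation $P^{-1}\in G$. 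Hence $g=P^{-1}$ satisfies the second hypothesis of Lemma \ref{CCcond}, namely $f_{K}(g\vec{m},\vec{\theta}_{\vec{m}})<f_{K}(\vec{m},\vec{\theta}_{\vec{m}})$, and that lemma then rules out any centered co-circular central configuration for $\vec{m}$ — contradicting the assumption.

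The only genuine obstacle is recognizing and justifying the coincidence in the second point above: a transposition of two coordinates is generally \emph{not} an element of the dihedral group $G$ once $n\geq4$, but on a mass vector all of whose entries but one coincide, the transposition swapping that entry with a neighbour acts exactly as a rotation. This is precisely where the hypothesis ``all the masses equal except one'' enters, and everything else in the proof is bookkeeping. I would also point out that this yields a very short proof of the result of Corbera and Valls \cite{CV2019}, and that the cases $n=3,4$ are in any event already covered by Corollary \ref{n=3} and the preceding corollary on the square.
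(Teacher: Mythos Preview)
Your proof is correct and follows essentially the same approach as the paper: the paper places the exceptional mass in the last slot and uses $g=P$ (noting $(I-P)\vec{m}=(0,\dots,0,1-m_{n},m_{n}-1)$), whereas you place it in the first slot and use $g=P^{-1}$, but in both cases the key point is exactly the one you identify---the cyclic shift coincides with a transposition of two unequal neighbouring masses, so Lemma~\ref{switch} forces a strict drop in $f_{K}$ and Lemma~\ref{CCcond} yields the contradiction.
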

\begin{proof}
Without loss of generality, suppose the mass vector is 
$$\vec{m}=(1,\dots,1,m_{n}).$$
We give the proof by contradiction and suppose $m_{n}\neq1$ and $(\vec{m},\vec{\theta_m})\in \mathcal{CC}_{0}$.
Because
$(I-P)\vec{m}=(0,\dots,0,1-m_{n},m_{n}-1),$
then Lemma \ref{switch} implies  
 $$f_{K}(P\vec{{m},{\theta}_{{m}}})<f_{K}(\vec{{m},{\theta}_{{m}}}),$$
which contradicts with Lemma \ref{CCcond}.

\end{proof}

Moreover, we have
\begin{corollary}
In the general power-law potential $n$-body problem, when  $n$ is odd there are no centered  co-circular central configurations having all the masses equal except  two. 
\end{corollary}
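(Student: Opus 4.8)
The plan is to argue by contradiction, in exactly the style of the preceding corollaries. Suppose $n$ is odd, that $n-2$ of the masses coincide, and that the mass vector $\vec{m}$ admits a centered co-circular central configuration $(\vec{m},\vec{\theta_m})\in\mathcal{CC}_{0}$. Rescaling all the masses, which does not change the notion of central configuration, I may assume the repeated value equals $1$, so that $m_{i}=1$ for every $i\notin\{p,q\}$ while the two exceptional masses are $m_{p}=a\neq1$ and $m_{q}=b\neq1$ for some $p\neq q$ (say $p<q$). By Lemma \ref{SPCC}\,(2) (equivalently, by Lemma \ref{CCcond}), finding $g\in G$ with $f_{K}(g\vec{m},\vec{\theta_m})<f_{K}(\vec{m},\vec{\theta_m})$ would give a contradiction, and by Lemma \ref{switch} it suffices to produce such a $g$ for which $g\vec{m}$ is obtained from $\vec{m}$ by interchanging two entries of different values.

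First I would treat the case $a\neq b$. Identifying $G$ with the dihedral group of order $2n$ acting on the indices $\mathbb{Z}/n\mathbb{Z}$, its reflections act by $i\mapsto c-i$ for $c\in\mathbb{Z}/n\mathbb{Z}$. Since $n$ is odd, $2$ is invertible modulo $n$, so taking $c\equiv p+q$ gives a reflection $g\in G$ with $g(p)=q$ and $g(q)=p$. Because every index other than $p$ and $q$ carries the mass $1$, the vector $g\vec{m}$ differs from $\vec{m}$ only in that the values $a$ and $b$ at the $p$-th and $q$-th slots are swapped. As $a\neq b$, Lemma \ref{switch} gives $f_{K}(g\vec{m},\vec{\theta_m})<f_{K}(\vec{m},\vec{\theta_m})$, the desired contradiction.

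It remains to treat the case $a=b\neq1$. Here a reflection exchanging $p$ and $q$ merely permutes two equal masses and therefore fixes $\vec{m}$, so instead I would use the rotation $g=P^{q-p}\in G$. A direct computation shows that $g\vec{m}$ carries the value $a$ exactly at the positions $p$ and $2p-q\pmod{n}$, and $1$ elsewhere; equivalently, $g\vec{m}$ is obtained from $\vec{m}$ by interchanging the entries at positions $q$ and $2p-q$, whose values are the unequal numbers $a$ and $1$. The key point is that $2p-q\not\equiv p,q\pmod{n}$, which holds precisely because $n$ is odd: the congruence $2p-q\equiv q$ would force $2(p-q)\equiv 0\pmod{n}$. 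Lemma \ref{switch} then again produces $f_{K}(g\vec{m},\vec{\theta_m})<f_{K}(\vec{m},\vec{\theta_m})$, contradicting Lemma \ref{CCcond}, and the proof is complete.

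The step I expect to be delicate is the last one, and it is also the only place where the oddness of $n$ is used: when $n$ is even and the two equal exceptional masses lie antipodally ($q-p\equiv n/2$), the rotation $P^{q-p}$ fixes $\vec{m}$ and the argument produces no strict decrease, which is exactly why that single family of mass distributions is excluded from the statement. All the other ingredients — the reduction to a single transposition and the elementary description of which index permutations belong to $G$ — are routine and run parallel to the proofs of the previous corollaries.
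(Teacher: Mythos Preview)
Your argument is correct and uses exactly the same mechanism as the paper: produce $g\in G$ for which $g\vec{m}$ differs from $\vec{m}$ by a single transposition of unequal entries, then apply Lemma~\ref{switch} together with Lemma~\ref{CCcond}. The paper's execution is a bit leaner, though. After normalising one of the two exceptional masses to sit at position $n$, the paper applies the single reflection $S$ (which fixes position $n$); the other exceptional entry $m_{k}$ is carried to position $n-k$, and since $n$ is odd one has $k\neq n-k$, so $S\vec{m}$ is obtained from $\vec{m}$ by swapping the unequal values $m_{k}$ and $1$ \emph{regardless} of whether $m_{k}$ equals $m_{n}$. This dispenses with your case split $a\neq b$ versus $a=b$. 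Incidentally, your remark that $2$ must be invertible modulo $n$ in the first case is unnecessary: the reflection $i\mapsto (p+q)-i$ swaps $p$ and $q$ for every $n$.
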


\begin{proof}
Without loss of generality, suppose the mass vector is 
$$\vec{m}=(1,\dots,1,m_{k},1,\dots,1,m_{n}).$$
According to the previous corollary, it is enough to prove $m_{k}=1$ if $(\vec{m},\vec{\theta_m})\in \mathcal{CC}_{0}$.\\
Now suppose $m_{k}\neq1$ and $n$ is an odd number, we have
$$(I-S)\vec{m}=(0,\dots,0,m_{k}-1,0,\dots,0,1-m_{k},0,\dots,0,0).$$
Then by Lemma \ref{switch}, 
$$f_{K}(S\vec{{m},{\theta}_{{m}}})<f_{K}(\vec{{m},{\theta}_{{m}}}),$$
contradicts with Lemma \ref{CCcond}.
\end{proof}

\section{Proof of Theorem \ref{main1}}\label{sec4}


In this section we prove our main theorem.
We give the proof  by contradiction and throughout this proof we set $K=\frac{2^{3+\alpha}}{\alpha}$ for simplicity.
Suppose $(\vec{m},\vec{\theta_{m}})\in \mathcal{CC}_{0}$, 
and let $$\mathcal{H}_{\alpha,\vec{{m}}} =(\frac{2U_{\alpha}(\vec{m},\vec{\theta_{m}})}{M^{2}}+\frac{2}{K})J_{n}-H_{\alpha,\vec{{m}}} ,$$ 
where $J_{n}$ is the $n\times n$ matrix of ones which means  every element of $J_{n}$ is equal to one. 

\begin{lemma}\label{lemma12}
If $(\vec{m},\vec{\theta_{m}})\in \mathcal{CC}_{0}$ and $2^{\alpha+1}\frac{U_{\alpha}(\vec{m},\vec{\theta_{m}})}{M^{2}}\leq1+\frac{\alpha}{4}$, the symmetric matrix $\mathcal{H}_{\alpha,\vec{m}}$ is positive semi-definite  and the only zero eigenvector is $\vec{m}$.
\end{lemma}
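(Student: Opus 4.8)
The plan is to deduce the positive semi-definiteness of $\mathcal{H}_{\alpha,\vec{m}}$ from the diagonally dominant criterion of Remark \ref{remark1}, after rescaling by the masses, reducing the whole matter to a single one-variable inequality.

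Write $c=\tfrac{2U_{\alpha}(\vec{m},\vec{\theta_m})}{M^{2}}+\tfrac{2}{K}$ and $g(r)=r^{-\alpha}+\tfrac{1}{K}r^{2}$. Since the diagonal of $H_{\alpha,\vec{m}}$ vanishes, $\mathcal{H}_{\alpha,\vec{m}}$ has all diagonal entries equal to $c$ and off-diagonal entries $c-g(r_{jk})$. First I would verify $\mathcal{H}_{\alpha,\vec{m}}\vec{m}=\vec{0}$: using the second halves of \eqref{ccc} and \eqref{ccc1} (namely $\sum_{j\neq k}m_{j}r_{jk}^{-\alpha}=\tilde{\lambda}=\tfrac{2U_{\alpha}}{M}$ and $\sum_{j\neq k}m_{j}r_{jk}^{2}=2M$) one gets $H_{\alpha,\vec{m}}\vec{m}=(\tilde{\lambda}+\tfrac{2M}{K})\vec{1}=cM\vec{1}$, while $J_{n}\vec{m}=M\vec{1}$, so the two terms of $\mathcal{H}_{\alpha,\vec{m}}$ cancel on $\vec{m}$. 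Next I would conjugate by $D=\operatorname{diag}(m_{1},\dots,m_{n})$: the matrix $\widetilde{\mathcal{H}}:=D\,\mathcal{H}_{\alpha,\vec{m}}\,D$ satisfies $\widetilde{\mathcal{H}}\vec{1}=D\mathcal{H}_{\alpha,\vec{m}}\vec{m}=\vec{0}$, has diagonal entries $m_{j}^{2}c>0$, and off-diagonal entries $m_{j}m_{k}\big(c-g(r_{jk})\big)$.

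Thus everything comes down to showing $c\le g(r_{jk})$ for all $j\neq k$. Since $r_{jk}=2|\sin\tfrac{\theta_{j}-\theta_{k}}{2}|\in(0,2]$, I would minimize $g$ over $(0,2]$: with $K=\tfrac{2^{3+\alpha}}{\alpha}$ one has $g'(r)<0$ on $(0,2)$ (equivalently $r^{\alpha+2}<2^{\alpha+2}$), so $g$ is strictly decreasing and $g(r_{jk})\ge g(2)=2^{-\alpha}+\tfrac{4}{K}=2^{-\alpha}\big(1+\tfrac{\alpha}{2}\big)$. On the other hand the hypothesis $2^{\alpha+1}\tfrac{U_{\alpha}}{M^{2}}\le 1+\tfrac{\alpha}{4}$ gives $\tfrac{2U_{\alpha}}{M^{2}}\le 2^{-\alpha}\big(1+\tfrac{\alpha}{4}\big)$, and since $\tfrac{2}{K}=2^{-\alpha}\tfrac{\alpha}{4}$ we obtain $c\le 2^{-\alpha}\big(1+\tfrac{\alpha}{2}\big)=g(2)\le g(r_{jk})$. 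Hence $\widetilde{\mathcal{H}}$ is symmetric with non-positive off-diagonal entries, positive diagonal, and $\widetilde{\mathcal{H}}\vec{1}=\vec{0}$; moreover its off-diagonal entries with $r_{jk}<2$ are strictly negative, so at most one entry per row vanishes (only between antipodal bodies) and $\widetilde{\mathcal{H}}$ is irreducible. By Remark \ref{remark1}, $\widetilde{\mathcal{H}}$ is positive semi-definite with $\vec{1}$ its unique zero eigenvector. Writing $\mathcal{H}_{\alpha,\vec{m}}=D^{-1}\widetilde{\mathcal{H}}D^{-1}$ transfers positive semi-definiteness back to $\mathcal{H}_{\alpha,\vec{m}}$ and shows its only zero eigenvector is $D\vec{1}=\vec{m}$.

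The one real obstacle is the scalar inequality $c\le\min_{(0,2]}g=g(2)$: this is where the precise value $K=\tfrac{2^{3+\alpha}}{\alpha}$ (making $r=2$ the minimizer of $g$) and the numerical hypothesis on $U_{\alpha}/M^{2}$ are both needed, and the powers of $2^{-\alpha}$ must be tracked carefully so that the bound on $c$ exactly meets $g(2)$. The remaining steps — the kernel computation, the conjugation by $D$, and the appeal to Remark \ref{remark1} — are routine once the sign of each off-diagonal entry is pinned down.
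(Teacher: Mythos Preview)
Your proof is correct and follows essentially the same route as the paper: conjugate $\mathcal{H}_{\alpha,\vec{m}}$ by the diagonal mass matrix, use the second halves of \eqref{ccc}--\eqref{ccc1} to place $\vec{1}$ in the kernel, verify the off-diagonal entries are non-positive via the one-variable inequality $g(r)\ge g(2)$ on $(0,2]$ together with the hypothesis, and invoke Remark~\ref{remark1}. Your write-up is in fact slightly more careful, since you explicitly justify the monotonicity of $g$ and note the irreducibility that handles the borderline case $r_{jk}=2$.
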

\begin{proof}
Let the diagonal matrix $\mathcal{M}=diag\{m_{1},m_{2},\dots,m_{n}\},$ then to prove this lemma is equivalent to  prove that
the symmetric matrix $\mathcal{M}^{T}\mathcal{H}_{\alpha,\vec{m}}\mathcal{M}$ is positive semi-definite  and the only zero eigenvector is $\vec{1}$.
Since $(\vec{m},\vec{\theta_{m}})\in \mathcal{CC}_{0}$, then the second part of  \eqref{ccc} and \eqref{ccc1} imply that
$$H_{\alpha,\vec{m}}\vec{m}=(\frac{2U_{\alpha}(\vec{m},\vec{\theta_{m}})}{M}+\frac{2M}{K})\vec{1}.$$
Thus
$\mathcal{H}_{\alpha,\vec{{m}}}\vec{m}=\vec{0}$
which implies 
\begin{equation}\label{dominant}
\mathcal{M}^{T}\mathcal{H}_{\alpha,\vec{{m}}}\mathcal{M}\vec{1}=\vec{0}.
\end{equation} 
The condition $2^{\alpha+1}\frac{U_{\alpha}(\vec{m},\vec{\theta_{m}})}{M^{2}}\leq1+\frac{\alpha}{4}$ implies
$$\frac{2U_{\alpha}(\vec{m},\vec{\theta_{m}})}{M^{2}}\leq \frac{1}{2^{\alpha}}+\frac{\alpha}{2^{2+\alpha}}
= \frac{1}{2^{\alpha}}+\frac{2}{K},$$
and a direct computation shows that for $r_{ij}\in (0,2]$ and $K=\frac{2^{3+\alpha}}{\alpha}$,
$$\frac{1}{r_{ij}^{\alpha}}+\frac{r_{ij}^{2}}{K}\geq \frac{1}{2^{\alpha}}+\frac{4}{K}.$$
Thus the off-diagonal entries  of the matrix $\mathcal{M}^{T}\mathcal{H}_{\alpha,\vec{{m}}}\mathcal{M}$ 
 $$m_{i}m_{j}[(\frac{2U_{\alpha}(\vec{m},\vec{\theta_{m}})}{M^{2}}+\frac{2}{K})-(\frac{1}{r_{ij}^{\alpha}}+\frac{r_{ij}^{2}}{K})]\leq0.$$
Together with \eqref{dominant}, we see that the matrix  $\mathcal{M}^{T}\mathcal{H}_{\alpha,\vec{{m}}}\mathcal{M}$  is diagonally dominant and thus positive semi-definite.
Moreover the vector  $\vec{1}$ is its unique zero eigenvector(see Remark \ref{remark1}).
%

\end{proof}

This lemma implies that, for unequal positive masses $\vec{m}$ and  $2^{\alpha+1}\frac{U_{\alpha}(\vec{m},\vec{\theta_{m}})}{M^{2}}\leq1+\frac{\alpha}{4}$,  $$\vec{m}^{T}(P-I)^{T} H_{\alpha,\vec{{m}}} (P-I) \vec{m}<0.$$  
Applying Lemma \ref{CCcond}, we have the following corollary.

\begin{corollary}
A necessary condition for unequal positive masses $\vec{m}$ to form a centered co-circular central configuration, namely $(\vec{m},\vec{\theta_{m}})\in \mathcal{CC}_{0},$
is that 
$$2^{\alpha+1}\frac{U_{\alpha}(\vec{m},\vec{\theta_{m}})}{M^{2}}>1+\frac{\alpha}{4}.$$
\end{corollary}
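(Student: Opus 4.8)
The plan is to derive the corollary as an immediate consequence of Lemma \ref{lemma12} together with Lemma \ref{CCcond}, using the matrix identity $\mathcal{H}_{\alpha,\vec{m}} = (\frac{2U_{\alpha}}{M^{2}}+\frac{2}{K})J_{n}-H_{\alpha,\vec{m}}$ to translate a statement about $H_{\alpha,\vec{m}}$ into a statement about $\mathcal{H}_{\alpha,\vec{m}}$. I argue by contraposition: assume $(\vec{m},\vec{\theta_{m}})\in\mathcal{CC}_{0}$ with $\vec{m}$ not all equal, and suppose for contradiction that $2^{\alpha+1}\frac{U_{\alpha}(\vec{m},\vec{\theta_{m}})}{M^{2}}\leq 1+\frac{\alpha}{4}$. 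Then Lemma \ref{lemma12} applies, so $\mathcal{H}_{\alpha,\vec{m}}$ is positive semidefinite with $\vec{m}$ its unique (up to scaling) zero eigenvector.

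Next I would compute $\vec{m}^{T}(P-I)^{T}H_{\alpha,\vec{m}}(P-I)\vec{m}$ by substituting $H_{\alpha,\vec{m}} = (\frac{2U_{\alpha}}{M^{2}}+\frac{2}{K})J_{n}-\mathcal{H}_{\alpha,\vec{m}}$. The $J_{n}$-term drops out: since $J_{n}=\vec{1}\vec{1}^{T}$ and $\vec{1}^{T}(P-I)\vec{m} = (\vec{1}^{T}P-\vec{1}^{T})\vec{m} = (\vec{1}^{T}-\vec{1}^{T})\vec{m}=0$ (because $P$ is a permutation matrix, $\vec{1}^{T}P=\vec{1}^{T}$), the quadratic form reduces to $-\vec{m}^{T}(P-I)^{T}\mathcal{H}_{\alpha,\vec{m}}(P-I)\vec{m}$. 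Now set $\vec{v}=(P-I)\vec{m}$. Since $\mathcal{H}_{\alpha,\vec{m}}$ is positive semidefinite, $\vec{v}^{T}\mathcal{H}_{\alpha,\vec{m}}\vec{v}\geq0$, with equality iff $\vec{v}$ is a scalar multiple of the unique zero eigenvector $\vec{m}$. But $\vec{v}=(P-I)\vec{m}=P\vec{m}-\vec{m}$ has entries summing to zero, so $\vec{v}$ is a multiple of $\vec{m}$ only if $\vec{v}=\vec{0}$, i.e. $P\vec{m}=\vec{m}$, which forces all components of $\vec{m}$ to be equal — contradicting our assumption. Hence $\vec{v}^{T}\mathcal{H}_{\alpha,\vec{m}}\vec{v}>0$ strictly, so $\vec{m}^{T}(P-I)^{T}H_{\alpha,\vec{m}}(P-I)\vec{m}<0$.

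Finally, this strict negativity is precisely the first hypothesis of Lemma \ref{CCcond} with $g=P\in G$, which concludes that there is no centered co-circular central configuration for $\vec{m}$ — contradicting $(\vec{m},\vec{\theta_{m}})\in\mathcal{CC}_{0}$. Therefore the supposed inequality fails, i.e. $2^{\alpha+1}\frac{U_{\alpha}(\vec{m},\vec{\theta_{m}})}{M^{2}}>1+\frac{\alpha}{4}$, proving the corollary. The only point requiring a little care — and the closest thing to an obstacle — is verifying that the vector $\vec{v}=(P-I)\vec{m}$ cannot be a nonzero multiple of the zero eigenvector $\vec{m}$; this is where one uses that $P$ is a cyclic permutation so the entries of $(P-I)\vec{m}$ sum to zero while the entries of $\vec{m}$ are all positive, and that $\vec{m}$ having a component repeated cyclically (the $P$-invariance $P\vec{m}=\vec{m}$) forces all masses equal. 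Everything else is a direct assembly of the two preceding lemmas, and in fact the text preceding this corollary has already sketched exactly this reduction, so the proof can be stated very briefly.
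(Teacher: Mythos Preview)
Your proof is correct and follows exactly the approach the paper uses: the paper states (just before the corollary) that Lemma~\ref{lemma12} implies $\vec{m}^{T}(P-I)^{T}H_{\alpha,\vec{m}}(P-I)\vec{m}<0$ for unequal masses under the reversed inequality, and then invokes Lemma~\ref{CCcond}. You have simply filled in the details of that implication---the vanishing of the $J_{n}$-term via $\vec{1}^{T}(P-I)\vec{m}=0$ and the strictness via the uniqueness of the zero eigenvector---which the paper leaves implicit.
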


Considering $\vec{\theta_{m}}$ is hard to compute and noticing that $\vec{\theta_{m}}$ is a minimum, we have the following corollary which can be  seen as our second criterion to judge whether a  masses vector $\vec{m}$ can not form a centered co-circular CC.

\begin{corollary}\label{cri}
For any given positive unequal masses $\vec{m}$, if there is some $\vec{\varphi}\in\mathcal{K}_{0}$ such that
$$2^{\alpha+1}\frac{U_{\alpha}(\vec{m},\vec{\varphi})}{M^{2}}\leq1+\frac{\alpha}{4},$$
then it can not be  a centered co-circular central configuration, namely $(\vec{m},\vec{\theta_{m}})\not\in \mathcal{CC}_{0}.$

\end{corollary}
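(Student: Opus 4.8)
The plan is to argue by contradiction: assume that the given positive mass vector $\vec{m}$, whose entries are not all equal (so $\vec{m}$ is not a multiple of $\vec{1}$), nonetheless admits a centered co-circular central configuration, i.e.\ $(\vec{m},\vec{\theta_{m}})\in\mathcal{CC}_{0}$, and deduce that the necessary condition of the preceding corollary is violated. The one substantive step is to trade the inaccessible angle $\vec{\theta_{m}}$ for the prescribed test angle $\vec{\varphi}$. Under the hypothesis $(\vec{m},\vec{\theta_{m}})\in\mathcal{CC}_{0}$ the first half of \eqref{ccc} gives $\nabla_{\vec{\theta}}U_{\alpha}(\vec{m},\vec{\theta_{m}})=\vec{0}$, so by Lemma \ref{unique} the angle $\vec{\theta_{m}}$ must coincide with $\vec{\varphi}_{\vec{m}}$, the unique critical point of $U_{\alpha}(\vec{m},\cdot)$ on $\mathcal{K}_{0}$, which that lemma identifies as the (global) minimum; hence $U_{\alpha}(\vec{m},\vec{\theta_{m}})\le U_{\alpha}(\vec{m},\vec{\varphi})$. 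Feeding this into the hypothesis $2^{\alpha+1}U_{\alpha}(\vec{m},\vec{\varphi})/M^{2}\le 1+\frac{\alpha}{4}$ gives $2^{\alpha+1}U_{\alpha}(\vec{m},\vec{\theta_{m}})/M^{2}\le 1+\frac{\alpha}{4}$, and then the preceding corollary --- which asserts this inequality must be strict for unequal masses forming a centered co-circular CC --- produces the contradiction.

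If one prefers to spell out this last implication rather than cite the preceding corollary, it goes as follows. With $2^{\alpha+1}U_{\alpha}(\vec{m},\vec{\theta_{m}})/M^{2}\le 1+\frac{\alpha}{4}$ and $(\vec{m},\vec{\theta_{m}})\in\mathcal{CC}_{0}$, Lemma \ref{lemma12} makes $\mathcal{H}_{\alpha,\vec{m}}$ positive semidefinite with kernel exactly $\mathbb{R}\vec{m}$. Since $\vec{m}$ is not all-equal, $(P-I)\vec{m}$ lies outside $\mathbb{R}\vec{m}$: from $(P-I)\vec{m}=c\vec{m}$ one gets $0=\vec{1}^{T}(P-I)\vec{m}=cM$, hence $c=0$ and $P\vec{m}=\vec{m}$, forcing all masses equal. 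Moreover $J_{n}(P-I)\vec{m}=\vec{1}\,(\vec{1}^{T}(P-I)\vec{m})=\vec{0}$, so $\mathcal{H}_{\alpha,\vec{m}}(P-I)\vec{m}=-H_{\alpha,\vec{m}}(P-I)\vec{m}$, and positive semidefiniteness of $\mathcal{H}_{\alpha,\vec{m}}$ on a vector outside its kernel gives $\vec{m}^{T}(P-I)^{T}H_{\alpha,\vec{m}}(P-I)\vec{m}<0$. Lemma \ref{CCcond} with $g=P$ then denies $\vec{m}$ any centered co-circular CC, the desired contradiction.

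I do not anticipate a genuine obstacle; the only point that needs care is the identification $\vec{\theta_{m}}=\vec{\varphi}_{\vec{m}}$ in the first step --- in general the minimizer of $f_{K}(\vec{m},\cdot)$ need not minimize $U_{\alpha}(\vec{m},\cdot)$, so the argument must route through the vanishing of $\nabla_{\vec{\theta}}U_{\alpha}$ that the $\mathcal{CC}_{0}$ hypothesis supplies. A robust alternative that avoids Lemma \ref{unique} altogether: for every $\vec{\varphi}\in\mathcal{K}_{0}$ one has $U_{-2}(\vec{m},\vec{\varphi})=M^{2}-\bigl|\sum_{j}m_{j}e^{\sqrt{-1}\varphi_{j}}\bigr|^{2}\le M^{2}$, with equality at $\vec{\theta_{m}}$ by \eqref{ccc1}; minimality of $\vec{\theta_{m}}$ for $f_{K}=U_{\alpha}+U_{-2}/K$ (Lemma \ref{unique1}) then yields $U_{\alpha}(\vec{m},\vec{\theta_{m}})+M^{2}/K=f_{K}(\vec{m},\vec{\theta_{m}})\le f_{K}(\vec{m},\vec{\varphi})\le U_{\alpha}(\vec{m},\vec{\varphi})+M^{2}/K$, i.e.\ $U_{\alpha}(\vec{m},\vec{\theta_{m}})\le U_{\alpha}(\vec{m},\vec{\varphi})$, which is all the first step requires.
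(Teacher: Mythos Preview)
Your argument is correct and follows essentially the same route as the paper: reduce to the preceding corollary by using that $\vec{\theta_{m}}$ minimizes $U_{\alpha}(\vec{m},\cdot)$ over $\mathcal{K}_{0}$, so the test angle $\vec{\varphi}$ can only raise $U_{\alpha}$. Your added care in justifying $U_{\alpha}(\vec{m},\vec{\theta_{m}})\le U_{\alpha}(\vec{m},\vec{\varphi})$ --- either via Lemma~\ref{unique} and the $\mathcal{CC}_{0}$ hypothesis, or via the $U_{-2}\le M^{2}$ trick combined with Lemma~\ref{unique1} --- makes explicit what the paper leaves to the reader.
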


We have already known that, the regular $n$-gon CC with equal masses is the trivial centered co-circular central configuration.
When we consider the regular $n$-gon CC, it is interesting to notice that $\mathcal{H}_{\alpha,\vec{1}},J_{n},H_{\alpha,\vec{1}}$ are all circulant matrices \cite{marcus1992survey}. 

An $n\times n$ matrix $C=(c_{kj})$ is called circulant if
$c_{kj}=c_{k-1,j-1}$ 
where $c_{0,j}$  and $c_{k,0}$  are identified with $c_{n,j}$ and $c_{k,n}$, respectively.
Thus every circulant matrix $C$ can be represented as
$C=\sum_{j=1}^nc_{1j}P^{j-1},$
 then they all have the same eigenvectors     $\vec{v}_k=(\xi_{k-1},\xi_{k-1}^2,\cdots,\xi_{k-1}^n)^T$, where $\xi_k=e^{\frac{2k\pi}{n}\sqrt{-1}}$  denote the $n$ complex $n$th roots of unity.
    
So  considering the regular $n$-gon CC with equal masses $(\vec{1},\vec{\theta_{1}})\in\mathcal{CC}_{0}$,
if 
$$2^{\alpha+1}\frac{U_{\alpha}(\vec{1},\vec{\theta_{1}})}{n^{2}}=\frac{1}{n}\sum_{j=1}^{n-1}\csc^{\alpha}\frac{j\pi}{n}\leq1+\frac{\alpha}{4},$$
then by Lemma \ref{lemma12}, $\mathcal{H}_{\alpha,\vec{1}}$ will be positive semi-definite with only one zero eigenvalue.
This implies that
the eigenvalues of $H_{\alpha,\vec{1}} $ are negative except the first one corresponding to the eigenvector $\vec{v}_{1}=\vec{1}$. So for all unequal masses $\vec{m}$, we have
 $$\frac{\vec{m}^{T}H_{\alpha,\vec{1}}\vec{m} }{M^{2}}< \frac{\vec{1}^{T}H_{\alpha,\vec{1}}\vec{1} }{n^{2}}.$$ 
 We notice that $f_{K}(\vec{m},\vec{\theta_{m}})=\frac{1}{2}\vec{m}^{T}H_{\alpha,\vec{m}}\vec{m}$, so by Lemma \ref{unique1}
 $$\frac{\vec{m}^{T}H_{\alpha,\vec{m}}\vec{m} }{M^{2}}\leq\frac{\vec{m}^{T}H_{\alpha,\vec{1}}\vec{m} }{M^{2}}< \frac{\vec{1}^{T}H_{\alpha,\vec{1}}\vec{1} }{n^{2}}.$$
Suppose there exist some unequal masses  $\vec{m}$ to form a centered co-circular CC, namely $(\vec{m},\vec{\theta_{m}})\in \mathcal{CC}_{0}$, the above inequality implies
$$\frac{2U_{\alpha}(\vec{m},\vec{\theta_{m}}) +2M^{2}}{M^{2}}< \frac{2U_{\alpha}(\vec{1},\vec{\theta_{1}}) +2n^{2} }{n^{2}},$$ 
then
$$2^{\alpha+1}\frac{U_{\alpha}(\vec{m},\vec{\theta_{m}}) }{M^{2}}< 1+\frac{\alpha}{4}.$$ 
So Theorem \ref{main1} follows by Corollary \ref{cri}.

\section{Acknowledgment}  The  author is supported by  the National Natural Science Foundation of China 
and China Scholarship Council. 
Part of this work was done when he was visiting University of Minnesota.  He thanks the School of Mathematics and Professor Richard Moeckel for their hospitality and support.

\end{document}